\theoremstyle{plain}
\newtheorem{thm}{Theorem}
\newtheorem{lem}[thm]{Lemma}
\theoremstyle{definition}
\def\P{\mathbb{P}}
\def\E{\mathbb{E}}
\def\R{\mathbb{R}}
\newcommand{\EE}[1]{\E\left[{#1}\right]}
\newcommand{\1}[1]{\mathbbm{1}_{#1}}
\newcommand{\di}{\,\textrm{d}}
\let\BFseries\bfseries\def\bfseries{\BFseries\mathversion{bold}} 
\title{Limit theorems for random walks with absorption}
\author{Micha Buck}
\begin{document} 
\maketitle

\begin{abstract}
We introduce a class of absorption mechanisms and study the behavior of real-valued centered random walks with finite variance that do not get absorbed. Our main results serve as a toolkit which allows obtaining persistence and scaling limit results for many different examples in this class. Further, our results reveal new connections between results in \cite{Kemperman1961} and \cite{Vysotsky2015}.
\end{abstract}

\noindent \textbf{Keywords.} Absorption time; Boundary crossing; Conditional limit theorem; First passage time; Killed random walk; Limit theorem; Persistence probability; Random walk.

\section{Introduction}
\subsection{General introduction}

We denote by $S_0,S_1,\ldots$ a real-valued random walk with i.i.d. increments $X_1,X_2,\ldots$ starting in $S_0 = x \in \R$ and denote by $\P_x$ the corresponding probability measure. We assume that $\EE{X_1} = 0$ and $\sigma^2 := \mathbb{V}[X_1] \in (0,\infty)$. 
The analysis of a random walk until its first zero-crossing is a classical theme.
Often, one is first interested in the asymptotic behavior of the so-called persistence probabilities $\P_x( S_m \geq 0 \colon m \leq n )$ for $x \geq 0$, as $n \to \infty$. 
In our setup, a powerful theory for these 
persistence probabilities  is available, which goes back to Sparre Andersen and Rogozin, see e.g. \cite{Andersen1953a}, \cite{Andersen1954}, \cite{Rogozin1971}. For instance, it is well-known that under the above assumptions 
\begin{equation}
\label{eq:classicPersistence}
\P_x( S_m \geq 0 \colon m \leq n ) \sim c_x n^{-1/2}, \quad \text{as } n \to \infty,
\end{equation}
where $c_x>0$ denotes a constant depending on the distribution of $X_1$ and $x$.
Moreover, random walks conditioned on the event $\{ S_m \geq 0 \colon m \leq n \}$ have been studied deeply. Here, for limit theorems, see for instance \cite{Bolthausen1976}, \cite{Iglehart1974} and \cite{Bertoin1994}. For a recent account of the classical theory in continuous time we refer to \cite{Doney2007}.

Furthermore, modifications of the classical questions have attracted attention in the literature. For instance, in \cite{Kemperman1961}, a model is studied where the random walk can stay a geometrically distributed time below zero instead of getting immediately killed when crossing zero. In \cite{Vysotsky2015}, random walks that avoid a bounded Borel set with non-empty interior are studied. The latter problem is in turn related to 
the study of 
persistence probabilities of iterated random walks.
In continuous time, we refer to \cite{Doering2018} and \cite{Doering2018a}, where stable processes and L\' evy processes with zero mean and finite variance, respectively, are studied that avoid an interval.  
Lastly, for a broader view, we refer to two recent works: In \cite{Kabluchko2017}, classical persistence results are generalized to multidimensional random walks and in \cite{Denisov2015b} the asymptotic behavior of a multidimensional random walk in a general cone is studied.
For a recent overview on persistence, we refer to the surveys \cite{Majumdar1999}, \cite{Bray2013a}, \cite{Aurzada2015a}.

We will introduce a class of absorption mechanisms in the one-dimensional case which generalize the classical situation. 
The aim of this paper is to study the behavior of random walks that do not get absorbed.  
More precisely, we will denote by $\tau$ the time of absorption and study the asymptotic behavior of the persistence probabilities $\P(\tau > n)$, as $n \to \infty$, and prove scaling limit results for the random walk conditioned on the event $\{ \tau > n \}$, as $n \to \infty$. 

Our model brings 
the situations in \cite{Kemperman1961} and \cite{Vysotsky2015} together. 
Further, since our results turn out to be quite robust, we can provide natural generalizations of the situation in \cite{Kemperman1961} including new scaling limit results and a leeway for many more examples.

At this point we stress that in \cite{Kemperman1961} and \cite{Vysotsky2015}, respectively, completely different techniques are used: 
In \cite{Kemperman1961}, where the random walk can only stay a geometrically distributed time below zero, the approach from the classical situation is generalized. More precisely, Theorem 15.1 in \cite{Kemperman1961} includes a generalized version of Sparre Andersen's Formula, which can be used to obtain persistence results by means of Tauberian theorems as in the classical case, see e.g. Section XII.7 in \cite{Feller1971}. 
In contrast, in \cite{Vysotsky2015}, where the random walk avoids a bounded Borel set with non-empty interior, it is used that the random walk starts afresh after every jump over the Borel set that is avoided. Then, the times of jumps and the corresponding sizes of overshoots are controlled to 
use classical persistence and scaling limit results afterwards.
Moreover, we mention that \cite{Port1967} contains a further approach by means of potential theory and the results there cover the most important part of the persistence result in \cite{Vysotsky2015}.

%

Our absorption model is defined as follows: 
We denote by $T_k$ the time of the $k$-th zero-crossing, so we set $T_0 := 0$ and \[ T_{k+1} := \inf\{ n > T_k \colon S_n < 0 ,\ S_{T_k} \geq 0 \text{ or } S_n \geq 0 ,\ S_{T_k} < 0 \} .\]
Further, we let $U$ denote either a real-valued random variable or a sequence $U^{(0)},U^{(1)},\ldots$ of (not necessarily independent) real-valued random variables. Let $U_0,U_1,\ldots$ be independent copies of $U$ that are also independent of the random walk. Moreover, let $K_i \colon \R \times \R \to \{0,1\}$ or, respectively, $K_i \colon \R^{\mathbb{N}_0} \times \R \to \{0,1\}$ be measurable functions for $i \in \mathbb{N}_0$.
Then, we define   
\[ 
\tau := \inf\{ n \colon \exists k \geq 0 \text{ such that } T_k \leq n < T_{k+1} \text{ and }  K_{n-T_k}(U_k,S_n) = 1 \}. 
\]
The family of functions $(K_i)$ describes the mechanism how the random walk gets absorbed. This mechanism depends on the passed time since the last zero-crossing, some random input and the position of the random walk. 
For example, if we want to model the situation in \cite{Kemperman1961}, we choose $U$ geometrically distributed with parameter $q \in (0,1)$ and $K_i(u,x)=1$ if $x<0$, $i \geq u$ and $K_i(u,x)=0$ otherwise. 
Then, the random walk gets absorbed when it is negative and the time spent below zero exceeds an independent geometrically distributed input. 
We emphasize at this point that the situation with arbitrary distributed $U$ is also covered by our results, whereas it was crucial in \cite{Kemperman1961} that $U$ is geometrically distributed, so that the Markovian structure is preserved.
To model the situation in \cite{Vysotsky2015} is even simpler, since the random input $U$ is not needed. Let $B$ be the Borel set that is avoided by the random walk. Then, we set $K_i(u,x)=1$ if $x \in B$ and $K_i(u,x)=0$ otherwise.

The outline of this paper is as follows: 
In the next subsection, we will present our main results and give a few comments on these results.
In Section~\ref{sec:examples}, we provide several examples of absorption mechanisms and apply our theorems.
Auxiliary statements can be found in Section~\ref{sec:preliminaries}. 
After fixing notation in Subsection~\ref{sub:notation}, we collect results that do not use assumptions from the absorption model in Subsection~\ref{sub:resultsRW}.
Results that are based on the absorption model can be found in Subsection~\ref{sub:resultsModel}. Finally, we prove our main results in Section~\ref{sec:results}.

\subsection{Main results}
\label{sub:heuristic}

Let us first fix some notation, which we will use to state our results. We define
$ \E[ X ; A ] := \E[ X \1{A} ]$, 
where $\1{A}$ denotes the indicator function of the measurable set $A$.
We will fix a sequence $(a_n)$ of positive real numbers with $a_n=o(1)$ such that $a_n n^{1/2} \nearrow \infty$. 
Further, for a sequence $(A_n)$ of non-empty subsets of $\R$ and positive sequences $(f_x(n))$, $(g_x(n))$ depending on $x \in \R$, we say $f_x(n) \sim g_x(n)$ uniformly in $A_n$ if $\sup_{x \in A_n} \left| \frac{f_x(n)}{g_x(n)} - 1\right| \to 0$, as $n \to \infty$.

Our main results will reduce persistence and scaling limit problems to related problems corresponding to the stopping time $\min(T_1,\tau)$ instead of $\tau$.
These problems in turn can be reduced in many cases of interest 
to very well understood classical problems. 

Before stating the results, we will take a short look at our assumptions  
and give a few comments on them. 
We will assume that there are constants $c>0$ and $\gamma \in (0,1)$ such that, for all $x \in \R$ and $k \in \mathbb{N}_0$, 
\begin{equation}
\label{eq:assumptionModel}
\P_x( \tau \geq T_k ) \leq c \gamma^k .
\tag{C1}
\end{equation}
We can think of $\gamma$ as 
a surviving-fee that the random walk has to pay each time it crosses zero. 
This assumption is crucial for our results. 
The following two assumptions encode the relation to the simpler problem 
corresponding to the stopping time $\min(T_1,\tau)$. 
We assume that there is a function $u \colon \R \to \R$ such that
\begin{equation}
\label{eq:thmConvU}
n^{1/2} \cdot  \P_y(\tau > n , T_1 > n)  \to  u(y), \quad \text{as } n \to \infty, \text{ for all }y \in \R.
\tag{C2}
\end{equation}
Further, we only study the asymptotic behavior of a random walk in such 
an absorption model 
if the random walk starts in a point $x$ such that a constant $c$ can be chosen such that, for all $n \in \mathbb{N}$,
\begin{equation}
\label{eq:assumptionOrder0}
\P_x(\tau > n) \geq c^{-1}n^{-1/2}.
\tag{C3}
\end{equation}
The latter assumption guarantees that the probabilities of certain considered events are of the same order as the classical persistence probabilities in \eqref{eq:classicPersistence}.
In many cases of interest, these conditions can be verified relatively easily.    
For instance, let us 
again consider the situation in \cite{Kemperman1961}. 
Condition \eqref{eq:assumptionModel} clearly 
holds
for a suitable choice of $c$ and $\gamma$, since \[ \P_x(\tau \geq T_k) \leq \P(U>0)^{(k-1)/2}=(1-q)^{(k-1)/2}. \]
Further, by \eqref{eq:classicPersistence}, the left term in \eqref{eq:thmConvU} converges to $c_y$ for $y\geq 0$, since in this case $\{ \tau > n , T_1 > n \}=\{T_1 > n\}$. For $y<0$, we have \[n^{1/2} \cdot \P_y(\tau > n , T_1 > n) \leq n^{1/2} \cdot \P(U > n)  = n^{1/2} \cdot (1-q)^{n+1} \to 0. \]
Moreover, by \eqref{eq:classicPersistence}, condition \eqref{eq:assumptionOrder0} holds for $x \geq 0$. For $x < 0$, we note that there is always a positive probability that the random walk reaches the positive half-line without getting absorbed after a fixed number of steps. Then, we can use \eqref{eq:classicPersistence} again to obtain \eqref{eq:assumptionOrder0}.

In the situation of \cite{Vysotsky2015}, where the random walk avoids a bounded Borel set with non-empty interior, conditions \eqref{eq:assumptionModel} - \eqref{eq:assumptionOrder0} are fulfilled as well. Since verifying these conditions is slightly more complicated as in the preceding example, we refer to Section \ref{sec:examples} for a treatment of this situation.

Our first main result deals with the asymptotic behavior of the persistence probabilities of a random walk with absorption. 
\begin{thm}
\label{thm:persistence}
Assume that \eqref{eq:assumptionModel} and \eqref{eq:thmConvU} hold. Then, for $x$ satisfying \eqref{eq:assumptionOrder0}, we have
\begin{equation*}
\P_x( \tau > n ) \sim V(x) n^{-1/2}, \quad \text{as } n \to \infty,
\end{equation*} 
where 
\begin{equation*}
V(x) = \sum_{k=0}^\infty \E_x[ u(S_{T_k}) ; \tau \geq T_k ].
\end{equation*} 
If the uniform condition $\sup_{|y| \leq a_n n^{1/2}} |n^{1/2} \P_y(\tau > n , T_1 > n) -  u(y) | = o(1)$ is fulfilled (instead of condition \eqref{eq:thmConvU}), 
then the statement holds uniformly in
$\{ x \colon |x| \leq a_n' n^{1/2}, \P_x( \tau > n ) \geq c^{-1}(|x|+1)n^{-1/2}\}$,
where $(a_n')$ is a sequence with $a_n'=o(1)$ and $a_n' n^{1/2}\nearrow \infty$.
\end{thm}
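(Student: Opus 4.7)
The plan is to decompose $\{\tau > n\}$ according to the index of the last zero-crossing occurring up to time $n$, and then to pass to the limit in each piece using the strong Markov property together with (C2). Explicitly,
\[
\P_x(\tau > n) = \sum_{k=0}^{\infty} \P_x(\tau > n,\, T_k \leq n < T_{k+1}).
\]
Applying the strong Markov property at the stopping time $T_k$, and using that the absorption mechanism restarts with the fresh independent copy $U_k$ after each zero-crossing, the process after $T_k$ is a distributional copy of the original started from $S_{T_k}$. Writing $g(m, y) := \P_y(\tau > m,\, T_1 > m)$ for the single-excursion persistence probability governed by (C2), this gives
\[
\P_x(\tau > n,\, T_k \leq n < T_{k+1}) = \E_x\bigl[\, \1{\{\tau \geq T_k,\, T_k \leq n\}}\, g(n - T_k, S_{T_k}) \,\bigr].
\]

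For each fixed $k$, on $\{\tau \geq T_k\}$ the stopping time $T_k$ is a.s. finite, so $T_k/n \to 0$ and (C2) yields
\[
n^{1/2}\, g(n - T_k, S_{T_k}) = \Bigl( \tfrac{n}{n - T_k} \Bigr)^{1/2} (n - T_k)^{1/2}\, g(n - T_k, S_{T_k}) \longrightarrow u(S_{T_k})
\]
pointwise. Passing the limit into the expectation (by dominated convergence with a suitable majorant) yields $n^{1/2}\, \P_x(\tau > n,\, T_k \leq n < T_{k+1}) \to \E_x[ u(S_{T_k}); \tau \geq T_k ]$ for each $k$, and exchanging the limit with the infinite sum by a second dominated convergence argument (now in the counting measure on $k$) gives the claimed formula for $V(x)$.

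The main obstacle is supplying the uniform-in-$k,n$ domination needed in both convergence steps. A bound of the form $g(m, y) \leq C\, m^{-1/2}(1 + |y|)$, which should be available from classical estimates on $\P_y(T_1 > m)$ collected in Subsection~\ref{sub:resultsRW}, combined with the geometric decay in (C1) and an $L^1$-estimate on $|S_{T_k}|$ under $\{\tau \geq T_k\}$ provided by Subsection~\ref{sub:resultsModel}, will produce a summable majorant of order $C\gamma^k n^{-1/2}$ and simultaneously imply $V(x) < \infty$; (C3) then secures the matching lower order $n^{-1/2}$ so that the assertion is non-trivial. For the uniform statement I would track the $x$-dependence throughout: the choice $a_n' = o(1)$ is aligned with the uniform hypothesis on $g(\cdot, y)$ for $|y| \leq a_n n^{1/2}$ so that, after paying the geometric cost (C1), the conditional distribution of $S_{T_k}$ is concentrated in the range where uniformity holds, while the factor $(|x|+1)$ in the lower bound from (C3) ensures that the error in each term is uniformly negligible compared with $V(x) n^{-1/2}$.
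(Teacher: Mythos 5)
Your decomposition over the index of the last zero-crossing and the application of the strong Markov property at $T_k$ is exactly the paper's starting point, and the pointwise limit $n^{1/2}g(n-T_k,S_{T_k})\to u(S_{T_k})$ via \eqref{eq:thmConvU} is fine. The genuine gap is in the domination you postpone to the end: the bound $g(m,y)\leq c\,m^{-1/2}(|y|+1)$ from \eqref{eq:eppel} gives, on $\{T_k\leq n<T_{k+1}\}$, only
\begin{equation*}
n^{1/2}\,g(n-T_k,S_{T_k})\;\leq\;c\,(|S_{T_k}|+1)\Bigl(\tfrac{n}{\,n-T_k\,}\Bigr)^{1/2},
\end{equation*}
and the factor $\bigl(n/(n-T_k)\bigr)^{1/2}$ is unbounded when $T_k$ is close to $n$. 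Hence the claimed summable majorant of order $c\,\gamma^k n^{-1/2}$ does not follow from \eqref{eq:assumptionModel}, \eqref{eq:eppel} and an $L^1$-bound on $|H_k|$ alone; the naive combination only yields $\P_x(\tau>n,\,T_k\leq n<T_{k+1})\leq c\,\gamma^k$, which is useless at the $n^{-1/2}$ scale. Showing that trajectories whose last crossing before $n$ happens at a time comparable to $n$ (or with a large overshoot) contribute $o(n^{-1/2})$ uniformly is precisely the content of the paper's Lemma~\ref{lem:jumpsBeginning}, whose proof needs two ingredients you do not have: the a priori estimate $\P_y(\tau>m)\leq c(|y|+1)m^{-1/2}$ of Lemma~\ref{lem:apriori} (itself a nontrivial consequence of \eqref{eq:assumptionModel} via Lemma~\ref{lem:assumptionModelConsequence} and a geometric time-splitting), used to kill crossings in $[b_n,(1-\delta)n]$, and the local estimate \eqref{eq:eppel0} $\P_y(T_1=m)\leq c(|y|+1)m^{-3/2}$, used for a last crossing in $((1-\delta)n,n]$. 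Without this reduction your two dominated-convergence steps cannot be carried out as stated.

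The same omission undermines your sketch of the uniform statement. There you need not only that late crossings are negligible uniformly in $x$, but also that large overshoots $|H_k|>a_n n^{1/2}$ are negligible, since the uniform hypothesis on $n^{1/2}p_y^{(T,\tau)}(n)-u(y)$ is only available for $|y|\leq a_n n^{1/2}$; the paper obtains this from the estimates (a1)--(b2) of Lemma~\ref{lem:estimates_an_bn} together with Lemma~\ref{lem:assumptionModelConsequence}, leading to \eqref{eq:unif_b_n} and \eqref{eq:unif_a_n}, and also has to shift from $a_n$ to $a_n'=a_{n-b_n}(1-b_n/n)^{1/2}$ to absorb the time shift $t\leq b_n$. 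Your remark that ``the conditional distribution of $S_{T_k}$ is concentrated in the range where uniformity holds'' is exactly the assertion that must be proved, not assumed. So the architecture of your argument matches the paper's, but the technical core of the proof --- the replacement of $\{\tau>n\}$ by the event that all crossings occur before $b_n$ with overshoots at most $a_n n^{1/2}$ --- is missing, and the proof is incomplete without it.
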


Our second main result concerns scaling limits of random walks conditioned to not get absorbed. 
For this purpose, let $\hat{S}_n$ 
denote the continuous process on $[0,1]$ with $\hat{S}_n(m/n):=S_m/(\sigma n^{1/2})$ and which is linearly interpolated elsewhere. As before, $\sigma^2$ is the variance of $X_1$.
For an event $A$, we denote by $Law_y( \hat{S}_n \mid A )$ the probability measure on the space $(C([0,1]),\| \cdot \|_\infty)$ corresponding to the process $\hat{S}_n$ starting in $y$ and conditioned on $A$. Here, $C([0,1])$ denotes the set of continuous functions defined on $[0,1]$ and $\| \cdot \|_\infty$ denotes the supremum norm.
Further, for a continuous stochastic process $W$ on $[0,1]$, we denote by $Law( W )$ the corresponding probability measure on $(C([0,1]),\| \cdot \|_\infty)$. We will use the symbol $\Rightarrow$ to denote weak convergence of such probability measures.

We assume that there are continuous processes $W_+$ and $W_-$ on $[0,1]$ such that, for $y$ with $u(y)>0$,
\begin{align}
\label{eq:thmScalingAssumption}
Law_y( \hat{S}_n \mid \tau > n , T_1 > n  ) \Rightarrow  \begin{cases} Law( W_+ ), & y \geq 0,\\ Law( W_- ),& y < 0.  \end{cases}
\tag{C4}
\end{align}
In many cases of interest, identity \eqref{eq:thmScalingAssumption} can be easily deduced from classical results. 
For instance, let us once again consider the situation in \cite{Kemperman1961}. 
We have already seen that in this situation $u(y)>0$ if and only if $y \geq 0$. But, for $y\geq 0$, we have $\{ \tau > n , T_1 > n \} = \{ T_1 > n \}$ and \eqref{eq:thmScalingAssumption} is covered by classical results with $W_+$ being a standard Brownian meander; see e.g. \cite{Bolthausen1976}.
For the situation in \cite{Vysotsky2015}, we refer again to Section \ref{sec:examples}.

Now, we are ready to state the second main result.

\begin{thm}
\label{thm:scaling}
Assume that \eqref{eq:assumptionModel}, \eqref{eq:thmConvU} and \eqref{eq:thmScalingAssumption} hold.
Then, for $x$ satisfying \eqref{eq:assumptionOrder0}, we have
\begin{equation*}
Law_x( \hat{S}_n \mid \tau > n  ) \Rightarrow Law(\rho W_+ + (1-\rho)W_-),
\end{equation*}
where $\rho$ denotes a random variable that is independent of $W_+$ and $W_-$ 
with $\P(\rho=1)=1-\P(\rho=0)=V(x)^{-1} \sum_{k=0}^\infty \E_x[ u(S_{T_k}) ; \tau \geq T_k , S_{T_k} \geq 0 ] \in [0,1] $.
\end{thm}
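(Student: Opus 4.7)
The plan is to decompose the event $\{\tau>n\}$ by the unique index $k$ with $T_k\leq n<T_{k+1}$, apply the strong Markov property at $T_k$, and reduce the asymptotics to those of $\hat S_n$ on the smaller event $\{\tau>n,T_1>n\}$ governed by \eqref{eq:thmScalingAssumption}. Concretely, for any bounded continuous $F\colon (C([0,1]),\|\cdot\|_\infty)\to\R$, I would write
\begin{equation*}
\E_x[F(\hat S_n);\tau>n]=\sum_{k=0}^{\infty}\E_x\bigl[F(\hat S_n);\tau\geq T_k,\ T_k\leq n<T_{k+1},\ \tau>n\bigr].
\end{equation*}
The definition of $\tau$ ensures that, conditionally on the pre-$T_k$ history and on $\{\tau\geq T_k\}$, the post-$T_k$ path is a fresh copy of the absorbed random walk started at $S_{T_k}$; the event $\{T_k\leq n<T_{k+1},\tau>n\}$ then translates into $\{T_1'>n-T_k,\tau'>n-T_k\}$ for that fresh walk.

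For fixed $k$, I would argue that as $n\to\infty$ the quantities $T_k/n$ and $\max_{j\leq T_k}|S_j|/\sqrt n$ vanish almost surely, so the portion of $\hat S_n$ on $[0,T_k/n]$ collapses to the constant path $0$ in supremum norm, and $\hat S_n$ is asymptotically indistinguishable in law from the rescaled fresh walk $\hat S'_{n-T_k}$ started at $S_{T_k}$, time-changed from $[0,1]$ onto $[T_k/n,1]$. Combining \eqref{eq:thmConvU} with \eqref{eq:thmScalingAssumption} conditionally gives, for every $y\in\R$,
\begin{equation*}
n^{1/2}\E_y[F(\hat S_n);\tau>n,T_1>n]\longrightarrow u(y)\bigl(\1{y\geq 0}\E[F(W_+)]+\1{y<0}\E[F(W_-)]\bigr),
\end{equation*}
with the case $u(y)=0$ yielding a vanishing left-hand side. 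Integrating this against the distribution of $S_{T_k}$ on $\{\tau\geq T_k\}$ (and checking that $F$ composed with the concatenated path is continuous at the splice point $T_k/n$) produces, for each fixed $k$,
\begin{equation*}
n^{1/2}\E_x[F(\hat S_n);\tau>n,T_k\leq n<T_{k+1}]\longrightarrow\E_x\bigl[u(S_{T_k})\bigl(\1{S_{T_k}\geq 0}\E[F(W_+)]+\1{S_{T_k}<0}\E[F(W_-)]\bigr);\tau\geq T_k\bigr].
\end{equation*}

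To pass the limit through the sum over $k$, I would truncate at some level $K$ and bound the tail by $\|F\|_\infty\sum_{k>K}\P_x(\tau\geq T_k)\cdot\sup_{y}n^{1/2}\P_y(\tau>n,T_1>n)$ via the strong Markov property. The first factor is $O(\gamma^K)$ by \eqref{eq:assumptionModel}, and the supremum is expected to be $O(1)$ from a uniform refinement of \eqref{eq:thmConvU} available in Section~\ref{sec:preliminaries}; the combined bound is therefore summable and uniformly small in $n$. Dividing by $n^{1/2}\P_x(\tau>n)$ and using Theorem~\ref{thm:persistence} to replace the denominator by $V(x)$, the limit equals $V(x)^{-1}\bigl(A\,\E[F(W_+)]+(V(x)-A)\E[F(W_-)]\bigr)$ with $A:=\sum_k\E_x[u(S_{T_k});\tau\geq T_k,S_{T_k}\geq 0]$, which is precisely $\E[F(\rho W_++(1-\rho)W_-)]$ for the $\rho$ in the theorem. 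The main obstacle is establishing the uniform tail bound in $k$: turning \eqref{eq:assumptionModel} and the pointwise \eqref{eq:thmConvU} into an estimate $\sup_y n^{1/2}\P_y(\tau>n,T_1>n)\leq C$ strong enough to push the dominated-convergence step through, while carefully handling the mild discontinuity introduced by gluing the pre- and post-$T_k$ rescaled paths.
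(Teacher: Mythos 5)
Your high-level plan coincides with the paper's: decompose by the index $k$ of the last zero-crossing before $n$, apply the Markov property at $T_k$, argue the rescaled prefix collapses, feed in \eqref{eq:thmConvU}--\eqref{eq:thmScalingAssumption} for the post-$T_k$ piece, and divide by $V(x)$ via Theorem~\ref{thm:persistence}. However, there are two genuine gaps in the way you propose to close it.

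First, your tail estimate relies on $\sup_{y}n^{1/2}\P_y(\tau>n,T_1>n)=O(1)$, which is false: the only available bound is $n^{1/2}\P_y(\tau>n,T_1>n)\leq n^{1/2}\P_y(T_1>n)\leq c(|y|+1)$ from \eqref{eq:eppel}, and there is no uniform refinement of \eqref{eq:thmConvU} in the paper that removes the $|y|$-dependence. The paper instead carries the $(|y|+1)$ factor through and integrates it against $\P_x(H_k\in dy;\tau\geq T_k)$, using Lemma~\ref{lem:assumptionModelConsequence} together with \eqref{eq:assumptionModel} to get the summable bound $\E_x[(|H_k|+1);\tau\geq T_k]\leq c\gamma^k(|x|+1)$; this is exactly the dominating sequence that makes the interchange of $\lim_n$ and $\sum_k$ legitimate. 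Without replacing your flat supremum by the $(|y|+1)$-weighted bound plus Lemma~\ref{lem:assumptionModelConsequence}, the dominated-convergence step does not go through.

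Second, you flag but do not resolve the splice-point continuity. The paper handles this by first invoking a modified version of Lemma~\ref{lem:jumpsBeginning} to replace $\{\tau>n\}$ with the event $\{\exists k\geq 0\colon T_k\leq b_n,\ T_{k+1}-T_k>n,\ \tau>n+T_k\}$ (so that $T_k/n\to 0$ and $|H_k|/\sqrt n\to 0$ uniformly on the retained event, rather than merely a.s.\ for fixed $k$), and then applies Billingsley's Theorem~5.5, an extension of the continuous mapping theorem to sequences of functions, to the concatenation operator $\Theta_n(g^{(n)},\cdot)$. Your a.s.\ collapse argument for fixed $k$ is in the right spirit but does not by itself give the uniformity over $k$ and over the conditioning event needed to push the limit inside the integral, nor does it supply the measure-theoretic justification for replacing $f(\hat S_n)$ by $f$ of the glued path. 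In short, the skeleton is right, but the two obstacles you acknowledge at the end are precisely the content that must be supplied, and the specific bound you propose for the first one is incorrect.
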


The proofs of our main results are based on the following observation: A random walk that survives a long time in such an absorption model typically crosses zero only a few times at the beginning and also the magnitude of an overshoot at a zero-crossing time is typically small. 
Once this is formalized and proved, our results can be naturally deduced from \eqref{eq:thmConvU} - \eqref{eq:thmScalingAssumption}.

Roughly, two 
facts contribute to this observation.
First, due to inequality \eqref{eq:assumptionModel}, only a few zero-crossings occur. 
Second, by results from renewal theory, which characterize the sizes of overshoots of a random walk over a fixed level, as well as classical persistence results, facts about the typical zero-crossing behavior of a random walk can be obtained, see Subsection \ref{sub:resultsRW}. Combining this with the consequences of \eqref{eq:assumptionModel} yields the above observation, which is rigorously stated in Lemma \ref{lem:jumpsBeginning}.

While the proof of Theorem \ref{thm:persistence} borrows many arguments from \cite{Vysotsky2015}, our arguments to prove Theorem~\ref{thm:scaling} are completely different from the corresponding ones in \cite{Vysotsky2015}. We give a direct proof without using deeper results characterizing tightness of probability measures on function spaces, which makes the proof less technical and the arguments might be easier to adopt in other similar situations.

\section{Examples}
\label{sec:examples}

Let us begin by recalling two results from the classical setup.
First, we state a uniform version of \eqref{eq:classicPersistence} and specify the constants $c_x$. 
It holds uniformly in 
$\{ x \colon |x| \leq a_n n^{1/2}\}$ 
that 
\begin{equation}
\label{eq:exactAsympClassic}
\P_x( T_1 > n ) \sim c_x n^{-1/2},
\end{equation}
where \[c_x = \frac{\sqrt{2} | x-\E_x[ S_{T_1} ] | }{\sigma \sqrt{\pi}};\] 
see e.g. Lemma 3 in \cite{Vysotsky2015}.
Second, as already mentioned, for $x \geq 0$, we have
\begin{equation}
\label{eq:scalingLimitClassic}
Law_x(\hat{S}_n \mid T_1 > n) \Rightarrow  Law(B_+),
\end{equation}
where $B_+$ is a standard Brownian meander. 
A proof of this statement for $x=0$ can be found in \cite{Bolthausen1976}.
The result for $x>0$ follows, for example, with the techniques in our proof of Theorem \ref{thm:scaling}. In the following, we will denote by $B_+$ a standard Brownian meander and set $B_-:=-B_+$.

Now, we will give several examples of absorption mechanisms. 
In particular, we will show how the situation in \cite{Kemperman1961} and \cite{Vysotsky2015} are covered by our results, and thus, how these situations are connected.
Since our results can be easily applied in the following examples, we just sketch 
how conditions \eqref{eq:assumptionModel} - \eqref{eq:thmScalingAssumption} can be verified and leave the details to the reader.

First, let us consider two natural generalizations of the situation in \cite{Kemperman1961}, where the random walk is allowed to stay a geometrically distributed time $U$ below zero. The assumption of $U$ being geometrically distributed is crucial in \cite{Kemperman1961} because the techniques heavily rely on the Markovian and the homogeneous structure of the process. Our approach does not need this assumption and covers also more sophisticated situations. 
Further, no scaling limit results have been proved in the literature.
\begin{itemize}
\item 
\emph{Random times below zero:}\\ 
Let $U$ be a non-negative random variable. We allow that $\P(U=\infty)>0$. We set $K_i(u,x)=1$ if $i \geq u$, $ x < 0$ and $K_i(u,x)=0$ otherwise.
We can think of this model as follows:
Every time the random walk enters the negative half-line, it can only survive an independent random time, according to the distribution of $U$, below zero.

The case $\P(U=\infty)=1$ is trivial. 
Thus, let us exclude this case. 
Now, we choose $u_0$ such that $\P(U>u_0)<1$. Then, \eqref{eq:assumptionModel} is fulfilled, since
\[
\P_x( \tau \geq T_k ) \leq \P_x( U > u_0 \text{ or } T_1 \leq u_0 )^{(k-1)/2} \leq \P(U>u_0)^{(k-1)/2}.
\]
For $y \geq 0$, we note that $\{ \tau > n , T_1 > n \} = \{ T_1 > n \}$, and thus, by \eqref{eq:exactAsympClassic}, \eqref{eq:thmConvU} holds with $u(y)=c_y$.
Further, again by \eqref{eq:exactAsympClassic}, we obtain, for $y<0$, that 
\[ n^{1/2} \cdot \P_y(\tau > n , T_1 > n) = n^{1/2} \cdot \P_y(T_1>n)  \P(U>n) \to c_y \P(U=\infty), \]
and thus, \eqref{eq:thmConvU} holds with $u(y)=c_y \P(U=\infty) $.
If $U$ and $X_1$ are bounded from above, clearly \eqref{eq:assumptionOrder0} does not hold for $x$ below a certain negative level. To verify \eqref{eq:assumptionOrder0} for all other $x$, we force the random walk to start with a certain number of positive jumps, so that it reaches the non-negative half-line with a positive probability and use \eqref{eq:exactAsympClassic} afterwards.
Now, we can apply Theorem~\ref{thm:persistence}. 
By the same argument, we also obtain the uniform statement in Theorem~\ref{thm:persistence}. 
Finally, by \eqref{eq:scalingLimitClassic} and an analogous argument as above, we obtain, for $y$ with $u(y)>0$,
\begin{align*}
Law_y( \hat{S}_n \mid \tau > n , T_1 > n ) \Rightarrow  \begin{cases} Law(B_+), &  y \geq 0, \\ Law(B_-), &  y < 0 . \end{cases}
\end{align*}
Hence, we can apply Theorem \ref{thm:scaling}.
\item 
\emph{Inhomogeneous absorption probabilities:}\\ 
We start with a measurable function $p \colon \R \to [0,1]$  
with $p(x)=0$ for $x\geq 0$ and $\liminf_{x \to -\infty} p(x) > 0$.
Further, we let $U$ be a sequence $U^{(0)},U^{(1)},\ldots$ of independent random variables which are uniformly distributed on $[0,1]$. Let $u$ denote a sequence $u^{(0)},u^{(1)},\ldots$ of real numbers in $[0,1]$.
Then, we set $K_i(u,x)=1$ if $p(x) \geq u^{(i)}$ and $K_i(u,x)=0$ otherwise.
We can think of $p(x)$ as the probability of an absorption at the point $x$. Thus, in every step the process gets absorbed with a probability according to its current position. 

For $y<0$, we consider the event where the random walk starts with a fixed number $n_0$ of negative jumps, so that the probability $p_0$ of an absorption until time $n_0$ is positive. 
Thus, it follows that \eqref{eq:assumptionModel} holds, since $\P_x(\tau \geq T_k) \leq (1-p_0)^{(k-1)/2}$.
Using the same idea, we obtain that, for $y<0$,
\[
n^{1/2} \cdot \P_y( \tau > n , T_1 > n ) \leq n^{1/2} \cdot (1-p_0)^{\lfloor n/n_0 \rfloor} \to 0.
\]
For $y \geq 0$, we note that $\{ \tau > n , T_1 > n \} = \{ T_1 > n \}$. Thus, it follows directly from \eqref{eq:exactAsympClassic} that 
\eqref{eq:thmConvU} holds with $u(y)=c_y$. Likewise, it follows directly from \eqref{eq:scalingLimitClassic} that, for $y \geq 0$,
\begin{align*}
Law_y( \hat{S}_n \mid \tau > n , T_1 > n ) \Rightarrow Law(B_+).
\end{align*}
Now, we can apply Theorem \ref{thm:persistence} and Theorem \ref{thm:scaling}. 
Note that the uniform statement in Theorem \ref{thm:persistence} can be obtained by the same argument.
Trivially, by considering $(-S_n)$, one obtains results for the case where $\liminf_{x \to \infty} p(x) > 0$ and $p$ vanishes below a given level.
\end{itemize}
Let us now turn our attention to the situation in \cite{Vysotsky2015}, where random walks that avoid a bounded Borel set are considered. In the next example, we will see how this situation is covered by our results. Further, we make the situation in \cite{Vysotsky2015} more complex by allowing some additional randomness to demonstrate how robust our toolkit is. Moreover, we consider a converse situation 
where the random walk is forced to pass an interval when crossing zero.
\begin{itemize}
\item
\emph{Avoiding random sets:}\\
Let $U$ be a discrete random variable on $\mathbb{N}_0$ and let $B_0,B_1,\ldots$ be a sequence of bounded Borel sets in $(-\infty,0)$.
Further, 
let us assume that $B_0$ has a non-empty interior, 
$\P(U=0)>0$ and $(S_n)$ is non-arithmetic, that is $\P(X_1 \in d\mathbb{Z})<1$ for all $d > 0$. 
We set $K_i(u,x)=1$ if $x \in B_u$ and $K_i(u,x)=0$ otherwise.
Thus, the walk gets absorbed when it hits these randomly chosen sets. 
If we choose $U=0$,  
we are in the situation of \cite{Vysotsky2015}.

Using 
standard results from renewal theory, one can show that \eqref{eq:assumptionModel} holds, see Section 2 in \cite{Vysotsky2015}. 
Since the random walk cannot be absorbed in the non-negative half-line, 
for $y\geq 0$, assumptions \eqref{eq:thmConvU} and \eqref{eq:thmScalingAssumption} are covered by \eqref{eq:exactAsympClassic} and \eqref{eq:scalingLimitClassic}, respectively.
For $y<0$, let us first fix an index $u$. Then, an application of 
our last example (Inhomogeneous absorption probabilities)
with 
\begin{align*}
p(x) = \begin{cases} 1, &  x \geq 0 \text{ or } x \in B_u, \\ 0, & \text{otherwise,} \end{cases}
\end{align*}
yields to 
persistence and scaling limit results for the case that the random walk avoids the set $B_u$ and the non-negative half-line. 
In particular, we obtain for every $y<0$ a constant $c_y^{(u)}$ such that \[ n^{1/2} \cdot \P_y(  S_m \not\in B_u \text{ for } m \leq n , T_1 > n ) \to c_y^{(u)}. \]
Therefore, we obtain that \eqref{eq:thmConvU} holds with
\begin{align*}
n^{1/2}& \cdot \P_y( \tau > n , T_1 > n )\\
&= 
\sum_{u \in \mathbb{N}_0}  n^{1/2} \cdot \P_y( S_m \not\in B_u \text{ for } m \leq n , T_1 > n ) \P (U=u)\\
&\to
\sum_{u \in \mathbb{N}_0}  c_y^{(u)} \P (U=u).
\end{align*}
The convergence in the last step follows by \eqref{eq:exactAsympClassic} and the dominated convergence theorem.
Hence, we can apply Theorem \ref{thm:persistence} and Theorem \ref{thm:scaling}.
Note that the uniform statement in Theorem \ref{thm:persistence} can be obtained by the same argument.
The $d$-arithmetic case (there is a largest constant $d>0$ with $\P(X_1 \in d\mathbb{Z})=1$) can be treated analogously.
\item
\emph{Crossing zero through an interval:}\\
Let $I$ be an open interval containing zero and let $(S_n)$ be non-arithmetic. We set $K_i(u,x)=1$ if $x \not\in I$, $i=0$ and $K_i(u,x)=0$ otherwise.
In this example, the random walk is forced to hit the interval $I$ at zero-crossing times 
and
we have no random input.

Using the ideas from the last example (Avoiding random sets), the application of our theorems is straightforward.
The $d$-arithmetic case can be treated analogously.
\end{itemize}

\section{Auxiliary results}
\label{sec:preliminaries}

\subsection{Notation}
\label{sub:notation}

Let us first fix some notation. For the most part, we will follow the notation in \cite{Vysotsky2015}. 
We set
\[ H_k := S_{T_{k}}.\] 
Sometimes it is more convenient to work with one probability measure $\P$ instead of the family of probability measures $(\P_x)$. For this reason, we set $\P=\P_0$ and denote by $T_k(x)$ the $k$-th time where the random walk crosses the level $-x$.
More precisely, we set $T_0(x) := 0$ and \[ T_{k+1}(x) := \inf\{ n > T_k(x) \colon S_n < -x ,\ S_{T_k(x)} \geq -x \text{ or } S_n \geq -x ,\ S_{T_k(x)} < -x \} .\]
Accordingly, we set $H_k(x):=S_{T_k(x)}+x$.
Further, in some situations, we will use the notation
\[
p_y^{(T)}(n) := \P_y( T_1 > n ), \quad p_y^{(\tau)}(n) := \P_y( \tau > n )
\]
and
\[
p_y^{(T,\tau)}(n) := \P_y( T_1 > n , \tau > n ).
\]
As before, we
denote by $(a_n)$ a sequence of positive real numbers with $a_n=o(1)$ and $a_n n^{1/2} \nearrow \infty$, as $n \to \infty$. Now, we let $(b_n)$ be a sequence of positive integers with $a_n^2 n = o(b_n)$ and $b_n=o(n)$. In particular, it follows that $b_n \to \infty$, as $n \to \infty$. 
To avoid technical problems, let us choose the sequence $(b_n)$ such that the sequences $(b_n)$ and $(n-b_n)$ are monotonically increasing.

\subsection{Auxiliary results for random walks}
\label{sub:resultsRW}

We start by collecting some basic facts about random walks with finite variance, which will be used at several points in this article.
Let us recall that there is a constant $c>0$ such that
\begin{equation}
\label{eq:eppel0}
\P_x(T_1=n) \leq c (|x|+1)n^{-3/2}, \quad \text{for all } x \in \R,
\end{equation}
see Lemma 5 in \cite{Eppel1979}, 
which implies that $c$ can be chosen such that also
\begin{equation}
\label{eq:eppel}
\P_x(T_1>n) \leq c (|x|+1)n^{-1/2}, \quad \text{for all } x \in \R.
\end{equation}
For ease of notation, in this article, $c$ will denote a varying positive constant which can change from line to line.
It is well-known that the ladder heights of the random walk $(S_n)$ are integrable if and only if $\mathbb{V}[X_1]<\infty$. Considering the random walk with increments distributed as the positive ladder heights of the radom walk $(S_n)$, Theorem~III.10.2~(iii) in \cite{Gut2009} states that
$\E_{x}[ H_1 ] = o(|x|)$, as $x\to-\infty$. Combining this with the corresponding result for the case with increments distributed as the negative ladder heights of $(S_n)$, one obtains 
\begin{equation}
\label{eq:gut10.2.iii}
\E_{x}[ |H_1| ] = o(|x|), \quad \text{as } |x| \to \infty.
\end{equation}
Using this fact, it can be obtained that,  
for any $\alpha \in (0,1)$, there is a constant $K_\alpha$ such that, for all $x \in \R$, \[\E_x[ |H_1| ] \leq \alpha |x| + K_\alpha .
\]
Now, the strong Markov property, an iteration
procedure and the preceding estimate give 
\begin{equation}
\label{eq:boundExpectH1}
\E_x[ | H_k | ] \leq \alpha^k \vert x \vert + \sum_{j=0}^{k-1} \alpha^j K_\alpha  \leq |x| + K,
\end{equation} 
where $K := {K_\alpha}/{(1-\alpha)}$.

\begin{lem}
\label{lem:estimates_an_bn}
Let $k \in \mathbb{N}_0$ be fixed. 
Then, as $n \to \infty$, one has 
\begin{enumerate}
\item[(a1)]
$\sup_{|x| \leq a_n n^{1/2}} \P_x( |H_k| > a_n n^{1/2} ) = o(1)$, 
\item[(a2)] $\sup_{|x| \leq a_n n^{1/2}} \E_x[ |H_k| ; |H_k| > a_n n^{1/2} ]/(|x|+1)  = o(1)$, 
\item[(b1)] $\sup_{|x| \leq a_n n^{1/2}} \P_x( |T_k| > b_n ) = o(1)$, 
\item[(b2)] $\sup_{|x| \leq a_n n^{1/2}} \E_x[ |H_k| ; |T_k| > b_n ]/(|x|+1) = o(1)$. 
\end{enumerate}
\end{lem}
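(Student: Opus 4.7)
My plan is to prove (a1) and (b1) as direct applications of Markov's inequality, to establish (a2) by induction on $k$ with $k=1$ requiring a coupling argument, and to deduce (b2) by combining (a2) with the uniform tail bound for $T_k$ underlying (b1). For (a1), the derivation of \eqref{eq:boundExpectH1} actually yields $\E_x|H_k| \leq \alpha^k|x| + K_\alpha/(1-\alpha)$ for every $\alpha \in (0,1)$; Markov's inequality then gives $\P_x(|H_k| > a_n n^{1/2}) \leq \alpha^k + K_\alpha/((1-\alpha) a_n n^{1/2})$ uniformly for $|x| \leq a_n n^{1/2}$, and letting $n \to \infty$ followed by $\alpha \downarrow 0$ proves (a1). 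For (b1), the strong Markov property at $T_{k-1}$ combined with \eqref{eq:eppel} and \eqref{eq:boundExpectH1} yields by induction in $k$ the bound $\P_x(T_k > b_n) \leq C_k(|x|+1)/\sqrt{b_n}$ valid for all $x \in \R$; the growth assumption $a_n^2 n = o(b_n)$ then converts this immediately into the uniform statement in (b1).

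The main work is (a2), which I would prove by induction on $k$ with $k=1$ as the crucial case. Given $\epsilon > 0$, \eqref{eq:gut10.2.iii} provides $M = M(\epsilon)$ with $\E_x|H_1| \leq \epsilon|x|$ whenever $|x| \geq M$, handling the regime $|x| \geq M$ at once. For the complementary regime $|x| \leq M$ the crucial ingredient is a coupling: working under $\P = \P_0$ and letting $\eta_1, \eta_2, \ldots$ denote the strict descending ladder heights of $(S_n)$, $M_k := \eta_1 + \ldots + \eta_k$, and $N(x) := \inf\{k \colon M_k > x\}$, one has for $x \geq 0$ the identity $|H_1(x)| = M_{N(x)} - x$. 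Since $k \mapsto M_k$ is increasing and $x \mapsto N(x)$ is non-decreasing, this yields the almost sure inequality $|H_1(x)| \leq M_{N(M)} = |H_1(M)| + M$ for every $x \in [0, M]$; a symmetric coupling via strict ascending ladder heights covers $x \in [-M, 0]$. Since $|H_1(M)|$ is integrable by \eqref{eq:boundExpectH1}, dominated convergence gives $\sup_{|x| \leq M} \E_x[|H_1|; |H_1| > L] \to 0$ as $L \to \infty$, completing the base case.

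For the induction step, the strong Markov property at $T_1$ gives $\E_x[|H_{k+1}|; |H_{k+1}| > L] = \E_x \E_{H_1}[|H_k|; |H_k| > L]$; splitting on $\{|H_1| \leq L\}$ versus $\{|H_1| > L\}$ and applying the induction hypothesis on the first event and the base case $k=1$ together with $\E_y|H_k| \leq |y| + K$ from \eqref{eq:boundExpectH1} on the second event, both contributions turn out to be $o(|x|+1)$ uniformly for $|x| \leq L$. Finally, (b2) follows by decomposing $\{T_k > b_n\}$ according to whether $|H_k| \leq a_n n^{1/2}$: the first piece is at most $a_n n^{1/2} \P_x(T_k > b_n) = O(a_n n^{1/2}(|x|+1)/\sqrt{b_n}) = o(|x|+1)$ by the bound used in (b1) together with $a_n^2 n = o(b_n)$, while the second piece is $o(|x|+1)$ by (a2). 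I expect the coupling step in the base case of (a2) to be the main obstacle: with only a finite variance assumption the descending ladder heights need not have a finite second moment, so no crude moment bound on $|H_1|$ uniform in a bounded range of starting points is available without a monotonicity argument of this kind.
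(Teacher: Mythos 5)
Your proposal is correct, and it reaches all four statements by a route that differs from the paper's in several places, generally in the direction of streamlining. For (a1) you apply Markov's inequality with the $\alpha$-parameterized bound $\E_x[|H_k|]\le \alpha^k|x|+K_\alpha/(1-\alpha)$ for every fixed $k$ at once and let $\alpha\downarrow 0$ after $n\to\infty$; the paper only treats $k=1$ this way and recovers $k\ge 2$ from (a2), so your version decouples (a1) from (a2). In the base case of (a2) the paper combines the monotonicity bound \eqref{eq:a2_0} on $[0,1)$ with the uniform integrability of the family \eqref{eq:gut10.2.ii} (Gut, Thm.~III.10.2(ii)) and (a1), whereas you split at a level $M(\epsilon)$, using only $\E_x[|H_1|]=o(|x|)$ from \eqref{eq:gut10.2.iii} on $\{|x|\ge M\}$ and the ladder-height coupling $|H_1(x)|\le |H_1(M)|+M$ together with dominated convergence on $\{|x|\le M\}$ (for $x<0$ the crossing is $S_n\ge -x$, so the symmetric argument uses weak ascending ladder heights -- an immaterial difference); this needs a slightly weaker renewal-theoretic input, at the price of obtaining the $o(1)$ only through a $\limsup\le\epsilon$ argument, which is all the lemma requires. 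Your induction step for (a2), splitting on $\{|H_1|\le a_nn^{1/2}\}$ and invoking \eqref{eq:boundExpectH1} on the complementary event, is essentially identical to the paper's. For (b1) you prove the global estimate $\P_x(T_k>b_n)\le C_k(|x|+1)b_n^{-1/2}$ directly from \eqref{eq:eppel} and \eqref{eq:boundExpectH1}, avoiding the paper's splitting on $|H_k|$, and this stronger form pays off in (b2): truncating $|H_k|$ at $a_nn^{1/2}$ and using the (b1)-bound on the truncated part and (a2) on the remainder replaces the paper's second induction entirely. Both arguments rest on the same three pillars -- \eqref{eq:eppel}, the renewal-theoretic control of overshoots, and \eqref{eq:boundExpectH1} -- so nothing essential is gained or lost; your route is shorter and more modular, while the paper's parallel inductions keep explicit sequences $c_n^{(j)}$ in a uniform format, though only the four statements of the lemma are used later.
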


\begin{proof}
The main ingredients of our proof are identities \eqref{eq:eppel}, \eqref{eq:gut10.2.iii}, \eqref{eq:boundExpectH1} and the fact that the family of random variables 
\begin{equation}
\label{eq:gut10.2.ii}
\left\{ \frac{|H_1(x)|}{|x|} \colon |x| \geq 1 \right\}
\end{equation}
is uniformly integrable, which is Theorem~III.10.2~(ii) in \cite{Gut2009}. As in \eqref{eq:gut10.2.iii}, one considers the random walk with increments distributed as the positive and negative ladder heights of $(S_n)$, respectively. 
We will prove the different statements in this lemma by induction. Further, note that all statements are trivial for the case $k=0$.
\begin{enumerate}
\item[(a1)]
First, we will consider the case $k=1$. 
Note that, by Markov's inequality, 
\[ \sup_{|x| \leq a_n n^{1/2}} \P_x(|H_1|>a_n n^{1/2}) \leq \sup_{|x| \leq a_n n^{1/2}}\frac{ \E_x[ |H_1| ]}{a_n n^{1/2}} \to 0 ,\] which follows straightforwardly from \eqref{eq:gut10.2.iii}.

We will use this fact to prove (a2).
Then, the 
statement for $k \geq 2$ follows from (a2) by Markov's inequality, since
\begin{align*}
\sup_{|x| \leq a_n n^{1/2}} \P_x( |H_k| > a_n n^{1/2} ) 
&= 
\sup_{|x| \leq a_n n^{1/2}} \P_x( |H_k| \1{\{|H_k| > a_n n^{1/2}\}} > a_n n^{1/2} ) \\
&\leq
\sup_{|x| \leq a_n n^{1/2}} \frac{\E_x[ |H_k| ; |H_k| > a_n n^{1/2} ]}{a_n n^{1/2}} = o(1).
\end{align*}
\item[(a2)]
We begin by proving the statement for the case $k=1$. First, note that, for $0 \leq x < 1$, one has 
\begin{equation}
\label{eq:a2_0}
|H_1( x )| \leq |H_1(  1 )|+1 \quad \text{a.s.}
\end{equation}
because the overshoot can be estimated either by $|H_1( 1 )|+1$ if $|H_1( x )|>1$ or $1$ if $|H_1( x )| \leq 1$. 
Thus, for $0 \leq x < 1$, we have
\begin{align}
\label{eq:a2_1}
\begin{split}
\sup_{0 \leq x < 1} & \frac{\E_x[ |H_1| ; |H_1|>a_n n^{1/2} ] }{x+1} \\
& \qquad \qquad \leq
\sup_{0 \leq x < 1} \E[ |H_1(x)| ; |H_1(x)|>a_n n^{1/2} ]  \\
& \qquad \qquad \leq
\E[ |H_1(1)|+1 ; |H_1(1)|+1>a_n n^{1/2} ] \\ 
& \qquad \qquad = 
o(1).
\end{split}
\end{align}
Further, using (a1) 
for the case $k=1$ and using the uniform integrability of \eqref{eq:gut10.2.ii}, we can conclude that 
\begin{align}
\label{eq:a2_2}
\begin{split}
&\sup_{1 \leq x \leq a_n n^{1/2}} \frac{ \E_x[ |H_1| ; |H_1|>a_n n^{1/2} ]}{x+1}\\  
&\leq
\sup_{1 \leq x \leq a_n n^{1/2}} \E\left[ \frac{|H_1(x)|}{x} ; H_1(x)>a_n n^{1/2} \right] = o(1).
\end{split}
\end{align}
For negative $x$ we proceed analogously. Altogether, this shows the claim for $k=1$.

Now, we proceed by induction. Let us assume that the statement holds for $1 \leq j \leq k$, and therefore also (a1) holds for $1 \leq j \leq k$. 
We set \[e^{(j)}(y,a,b) := \E_y[ |H_j| ; |H_j| > a , T_j > b ]\] 
and, for $1 \leq j \leq k$, by the induction hypothesis, we can choose $c^{(j)}_n = o(1)$ such that 
\begin{equation}
\label{eq:def_c_n}
e^{(j)}(y,a_n n^{1/2},0) \leq c^{(j)}_n (|y|+1) \quad \text{and} \quad  \P_y( |H_j| > a_n n^{1/2} ) \leq c^{(j)}_n,
\end{equation} 
for all $ |y| \leq a_n n^{1/2}$.
Then, using the strong Markov property and splitting the event of the first jump, 
we obtain
\begin{align*}
\E_x[ |H_{k+1}| ;& |H_{k+1}| > a_n n^{1/2} ]\\
 = &
\int e^{(k)}(y,a_n n^{1/2},0) \P_x( H_1 \in \di y  )\\
 = &
\int e^{(k)}(y,a_n n^{1/2},0) \P_x( H_1 \in \di y , |H_1| > a_n n^{1/2} ) \\
&+
\int e^{(k)}(y,a_n n^{1/2},0) \P_x( H_1 \in \di y , |H_1| \leq a_n n^{1/2} ) \\
\leq &
\int (|y|+K) \P_x( H_1 \in \di y , |H_1| > a_n n^{1/2} ) \\
&+
\int c^{(k)}_n( |y|+1 ) \P_x( H_1 \in \di y , |H_1| \leq a_n n^{1/2} ) \\
\leq &
c_n^{(1)}(|x|+K+1)  +
c_n^{(k)}( |x|+K + 1),
\end{align*}
where we used \eqref{eq:boundExpectH1} 
and 
\eqref{eq:def_c_n} for the cases $j=1$ and $j=k$. 
This completes the proof of the statement.
\item[(b1)]
By \eqref{eq:eppel}, 
we have  
\begin{align*}
\sup_{|x| \leq a_n n^{1/2}} \P_x( T_1 > b_n ) &\leq \sup_{|x| \leq a_n n^{1/2}} c( |x| + 1 ) \lfloor b_n \rfloor^{-1/2}\\& \leq c( |a_n n^{1/2}| + 1 ) \lfloor b_n \rfloor^{-1/2} = o(1) ,
\end{align*}
since $a_n n^{1/2} = o(b_n^{1/2})$ and $b_n \to \infty$.
Thus, for $k=1$, the statement holds.

Again, we proceed by induction and assume that the statement holds for $1 \leq j \leq k$.
We have 
\begin{align*}
\P_x(T_{k+1}>b_n) \leq \P_x(T_k > b_n/2) + \P_x(T_{k+1}-T_k > b_n/2).
\end{align*}
The first term vanishes uniformly in $\{ x \colon |x| \leq a_n n^{1/2} \}$ due to the induction hypothesis. 
For the second term, we obtain
\begin{align*}
\P_x(T_{k+1}-T_k > b_n/2)
=&
\int p^{(T)}_y(b_n/2) \P_x( H_k \in \di y  ) \\
=&
\int p^{(T)}_y(b_n/2) \P_x( H_k \in \di y , |H_k| > a_n n^{1/2} ) \\
&+
\int p^{(T)}_y(b_n/2) \P_x( H_k \in \di y , |H_k| \leq a_n n^{1/2} )\\
\leq&
\P_x(  |H_k| > a_n n^{1/2} )
+
\sup_{|x| \leq a_n n^{1/2}} \P_x( T_1 > b_n/2 ).
\end{align*}
Now, the claim follows from statement (a1) and the induction hypothesis for the case $j=1$. 
\item [(b2)]
We proceed similarly as in the proof of (a2). By the monotonicity of $T_1(\ \cdot\ )$ and by (b1) for the case $k=1$, we obtain by the same arguments used to show \eqref{eq:a2_1} and \eqref{eq:a2_2} that  
\[ 
\sup_{|x| \leq a_n n^{1/2}} \frac{\E_x[ |H_1| ; T_1 > b_n ]}{|x|+1} = o(1),
\]
which is the statement for the case $k=1$.

Let us assume that the statement holds for $1 \leq j \leq k$. Then, by the induction hypothesis we can choose $c^{(j)}_n = o(1)$, for $1 \leq j \leq k$, such that
\begin{equation}
\label{eq:def_c_n2}
e^{(j)}(y,0,b_n/2) \leq c^{(j)}_n (|y|+1) \quad \text{and} \quad  \P_y( |T_j| > b_n/2 ) \leq c^{(j)}_n
\end{equation}
and \eqref{eq:def_c_n} hold, for all $ |y| \leq a_n n^{1/2}$.
Now, 
we obtain
\begin{align*}
\E_x[ |H_{k+1}| ; T_{k+1} > b_n ]
\leq &
\int e^{(k)}(y,0,0) \P_x( H_1 \in \di y , T_1 > b_n/2 ) \\
&+
\int e^{(k)}(y,0,b_n/2) \P_x( H_1 \in \di y , |H_1| > a_n n^{1/2} ) \\
&+
\int e^{(k)}(y,0,b_n/2) \P_x( H_1 \in \di y , |H_1| \leq a_n n^{1/2} ) \\
\leq &
\int (|y|+K) \P_x( H_1 \in \di y , T_1 > b_n/2 ) \\
&+
\int (|y|+K) \P_x( H_1 \in \di y , |H_1| > a_n n^{1/2} ) \\
&+
\int c_n^{(k)} (|y|+1) \P_x( H_1 \in \di y , |H_1| \leq a_n n^{1/2} ) \\
\leq &
c_n^{(1)}(|x|+K+1)
+
c_n^{(1)}(|x|+K+1)\\
&+
c_n^{(k)}(|x|+K+1).
\end{align*}
Here, we used \eqref{eq:boundExpectH1} and \eqref{eq:def_c_n2} for $j=k$ in the second step.
Further, in the last step, we used \eqref{eq:def_c_n} and \eqref{eq:def_c_n2} 
for the case $j=1$ and again estimate \eqref{eq:boundExpectH1}.
Now, the statement follows.
\end{enumerate}
\end{proof}

\subsection{Auxiliary results for random walks with absorption}
\label{sub:resultsModel}

\begin{lem}
\label{lem:assumptionModelConsequence}
Assume \eqref{eq:assumptionModel} holds. 
Then, $c>0$ and $\gamma \in (0,1)$ can be chosen such that, for all $x \in \R$ and $k \in \mathbb{N}_0$, \[\E_x[ |H_k| ; \tau \geq T_k] \leq c \gamma^k (|x|+1) . \]
\end{lem}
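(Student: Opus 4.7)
\medskip
\noindent\textbf{Proof proposal.}
The plan is to set up a one-step recursion for
\[
f_k(x) := \E_x[|H_k|\,;\,\tau \geq T_k]
\]
and iterate it. The key move is the strong Markov property at the stopping time $T_{k-1}$. The event $\{\tau\ge T_{k-1}\}$ and the value $H_{k-1}$ are determined by the walk up to $T_{k-1}$ together with the absorption inputs $U_0,\dots,U_{k-2}$, whereas the inner quantity $\E_{H_{k-1}}[|H_1|\,;\,\tau\ge T_1]$ depends only on the post-$T_{k-1}$ walk and the fresh variable $U_{k-1}$; the two parts decouple and I obtain
\[
f_k(x) = \E_x\bigl[\,g(H_{k-1})\,;\,\tau\ge T_{k-1}\bigr], \quad g(y):=\E_y[|H_1|\,;\,\tau\ge T_1].
\]

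Next, I would drop the indicator in $g$ and invoke the bound $\E_y[|H_1|]\le \alpha|y|+K_\alpha$, which holds for any $\alpha\in(0,1)$ with a suitable constant $K_\alpha$ (as recalled just before \eqref{eq:boundExpectH1}). Combined with \eqref{eq:assumptionModel} this gives the recursion
\[
f_k(x) \le \alpha\,f_{k-1}(x) + K_\alpha\,\P_x(\tau\ge T_{k-1}) \le \alpha\,f_{k-1}(x) + c K_\alpha\,\gamma^{k-1},
\]
with $c$ and $\gamma$ the constants of \eqref{eq:assumptionModel}. Iterating from $f_0(x)=|x|$ yields
\[
f_k(x) \le \alpha^k|x| + c K_\alpha \sum_{i=0}^{k-1} \alpha^i \gamma^{k-1-i}.
\]

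The single tuning step is to choose $\alpha<\gamma$ (e.g.\ $\alpha=\gamma/2$); then $\alpha^k\le\gamma^k$ and the geometric sum is bounded uniformly in $k$ by $\gamma^{k-1}/(1-\alpha/\gamma)$, which delivers $f_k(x)\le C\gamma^k(|x|+1)$ with $C:=1+cK_\alpha/(\gamma(1-\alpha/\gamma))$. I do not anticipate a real obstacle, as each ingredient (strong Markov, \eqref{eq:boundExpectH1}, \eqref{eq:assumptionModel}) is directly available. The one place that deserves care is the strong Markov step, since the absorption mechanism involves the external variables $U_i$; but because these are i.i.d.\ and independent of the walk, conditional on the pre-$T_{k-1}$ information the variable $U_{k-1}$ is still distributed as $U$, which is exactly what makes the above factorisation legitimate.
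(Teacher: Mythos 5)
Your proof is correct, but it takes a genuinely different route from the paper. The paper first proves a uniform overshoot estimate: via the renewal theorem (Theorem III.10.3 in \cite{Gut2009}) and the uniform integrability of the family $\{|H_1(x)|/|x|\}$ it finds a level $K$ with $\E_x[|H_1|;|H_1|>K]\le\gamma|x|$ for $|x|\ge 1$ (and $\le\gamma K$ for $|x|\le K$), and then decomposes $\{\tau\ge T_k\}$ according to the last index $j\le k$ with $|H_j|\le K$; condition \eqref{eq:assumptionModel} is applied only at that index $j$, the remaining crossings being controlled by pure overshoot estimates, and the resulting $k\gamma^k$ term forces a final enlargement of $\gamma$. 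You instead drop the future part of the absorption event (or equivalently factorize at $T_{k-1}$ and then discard the indicator $\{\tau\ge T_1\}$ inside $g$), use the strong Markov property at $T_{k-1}$ together with the bound $\E_y[|H_1|]\le\alpha|y|+K_\alpha$ already recorded before \eqref{eq:boundExpectH1}, apply \eqref{eq:assumptionModel} directly at index $k-1$, and iterate the affine recursion $f_k\le\alpha f_{k-1}+cK_\alpha\gamma^{k-1}$. This is shorter, bypasses \eqref{eq:probHitInterval} and the uniform integrability of \eqref{eq:gut10.2.ii} entirely, and (choosing $\alpha<\gamma$) even delivers the conclusion with the same $\gamma$ as in \eqref{eq:assumptionModel}, which is marginally stronger than what the paper's proof yields; note also that $f_0(x)=|x|$ is correct since $T_0=0$. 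Your cautionary remark about the strong Markov step is the right one, and the factorisation is legitimate for exactly the reason you give (the $U_i$ are i.i.d., independent of the walk, and the functions $K_i$ are indexed by elapsed time since the last crossing); in fact the paper uses the same decoupling implicitly elsewhere, e.g.\ with $p_y^{(\tau)}$ in the proof of Lemma~\ref{lem:jumpsBeginning}, and in your argument the point becomes moot once the indicator is dropped, since the post-$T_{k-1}$ functional is then a pure random-walk quantity.
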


\begin{proof}
We begin by showing that, for all $\delta>0$, a constant $K \geq 1$ can be chosen such that 
\begin{equation}
\label{eq:assumptionModelConsequence_step1}
\P_x(|H_1|>K) \leq \delta, \quad \text{for all } x \in \R.
\end{equation}
Let us first assume 
that the random walk $(S_n)$ is non-arithmetic. Let us further recall, from renewal theory, that in this case 
\begin{equation}
\label{eq:probHitInterval}
\lim_{x \to \infty} \P_x( |H_1| < K ) = \frac{1}{\E_0[ | H_1 | ]} \int_0^{K} \P_0( |H_1| > t ) \di t ,
\end{equation}
see e.g. Theorem III.10.3 (i) in \cite{Gut2009}. 
It follows directly from \eqref{eq:probHitInterval} that, for $\delta>0$, 
a constant $x_0 \geq 0$ and $K$ can be chosen such that $\P_x(|H_1|<K) \geq 1-\delta$, for  $x \geq x_0$.
For $0 \leq x \leq x_0$, by the same argument that we used to show \eqref{eq:a2_0}, we obtain that \[|H_1(x)| \leq |H_1(x_0)|+x_0.\]
Thus, $K$ can be chosen such that $\P_x(|H_1|<K) \geq 1-\delta$ holds, for all $x \geq 0$. Analogously, we argue for negative $x$. 
In the $d$-arithmetic case, we use the identity 
\[
\lim_{n \to \infty} \P_{nd}( |H_1| < kd ) = \frac{d}{\E_0[ | H_1 | ]}\sum_{j=0}^{k-1}   \P_0( |H_1| > jd ) ,
\] see e.g. Theorem III.10.3 (ii) in \cite{Gut2009}, and proceed likewise.
Hence, in both cases we obtain \eqref{eq:assumptionModelConsequence_step1}. 
Moreover, we choose $K \geq 1$ to make our next arguments a little bit smoother.

We will now estimate 
the quantity $\E_x[ |H_1| ; |H_1| > K ]$. 
First, we consider 
the case $|x|\geq 1$.
Here,
since $\delta>0$ was arbitrary in \eqref{eq:assumptionModelConsequence_step1}, we obtain, due to the uniform integrability of \eqref{eq:gut10.2.ii}, that, if $K$ is chosen large enough, 
\begin{equation}
\label{eq:proofAssumption1}
\E_x[ |H_1| ; |H_1| > K ]
=
|x| \E\left[ \frac{H_1(x)}{|x|} ; |H_1| > K \right]
\leq \gamma |x| , \quad \text{for } |x| \geq 1,
\end{equation}
with $\gamma$ from \eqref{eq:assumptionModel}. 
Now, by an iteration procedure, we obtain that, 
\begin{equation}
\label{eq:proofAssumption3}
\E_x[ |H_k| ; |H_j| > K \text{ for } j \leq k ] \leq \gamma^k |x|, \quad \text{for } |x| \geq 1.
\end{equation} 
Further,
we obtain by the same argument as in \eqref{eq:a2_0} and \eqref{eq:a2_1} that
\begin{equation*}
\label{eq:proofAssumption2}
\E_x[ |H_1| ; |H_1| > K ] 
\leq 
\gamma, \quad \text{for } |x| < 1,
\end{equation*}
if $K$ is chosen large enough. 
Combing this with \eqref{eq:proofAssumption1}, we obtain 
\begin{equation}
\label{eq:proofAssumption4}
\E_x[ |H_1| ; |H_1|>K ] \leq  \gamma K, \quad \text{for } |x| \leq K.
\end{equation}

Now, we are ready to prove the statement of the lemma. For this purpose, we note that
\begin{align}
\label{eq:decompsotionEvents}
\begin{split}
\left\{ \tau \geq T_k  \right\}
\subseteq &
\bigcup_{j=0}^{k-1} \left\{ |H_j| \leq K , |H_{j+1}| > K, \ldots , |H_{k}| > K , \tau \geq T_j  \right\} \\
& \cup
\left\{ |H_0| > K , \ldots , |H_k| > K  \right\}
\cup
\left\{ |H_k| \leq K , \tau > T_k  \right\}.
\end{split}
\end{align}
Further, we obtain, for fixed $0 \leq j \leq k-1$,
\begin{align*}
& \E_x[ |H_k| ;  |H_j| \leq K , |H_{j+1}| > K, \ldots , |H_{k}| > K, \tau \geq T_j ] \\
&\leq
\int \gamma^{k-j-1} |y| \P_x( H_{j+1} \in \di y , |H_{j+1}| > K , |H_j| \leq K , \tau \geq T_j ) \\
&\leq
\int \gamma^{k-j} K \P_x( H_j \in \di y , \tau \geq T_j ) \\
&\leq
cK\gamma^k,
\end{align*}
where we used \eqref{eq:proofAssumption3} in the first step, \eqref{eq:proofAssumption4} in the second step and \eqref{eq:assumptionModel} in the last step.
Hence, by \eqref{eq:decompsotionEvents}, summing over $j$ and using \eqref{eq:proofAssumption3} and \eqref{eq:assumptionModel} yield
\[
\E_x[ |H_k| ; \tau \geq T_k ] 
\leq 
c K k \gamma^k + \gamma^k |x|  +  cK\gamma^k, \quad \text{for all } x \in \R.
\]
The claim now follows, when we choose (new) suitable constants $c > 0$ and $\gamma \in (0,1)$.
\end{proof}

Next, we will prove an a priori estimate for the persistence probabilities $\P_x(\tau > n)$ that is of the same type as the classical result in \eqref{eq:eppel}.
\begin{lem}
\label{lem:apriori}
If \eqref{eq:assumptionModel} holds, then
there is a constant $c>0$ such that 
\begin{equation*}
\P_x(\tau > n) \leq c(|x|+1)n^{-1/2}, \quad \text{for all } x \in \R.
\end{equation*}
\end{lem}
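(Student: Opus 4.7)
The plan is to decompose $\{\tau > n\}$ according to which of the inter-crossing intervals $T_{j+1}-T_j$ is responsible for exhausting the time budget $n$, and then exploit the strong Markov property at $T_j$ together with the one-sided tail estimate \eqref{eq:eppel} and the absorption-weighted overshoot bound of Lemma \ref{lem:assumptionModelConsequence}.

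Concretely, I would fix weights $\beta_j := 6/(\pi^2(j+1)^2)$ so that $\sum_{j\geq 0}\beta_j = 1$. If $\tau > n$, let $k^* = \max\{k : T_k \leq n\}$. Then $\tau > n \geq T_j$ yields $\tau \geq T_j$ for all $j \leq k^*$, while $T_{k^*+1} > n$ combined with $\sum_{j=0}^{k^*}\beta_j \leq 1$ forces $T_{j+1}-T_j > n\beta_j$ for at least one $j \in \{0,\ldots,k^*\}$ (otherwise $T_{k^*+1} = \sum_{j=0}^{k^*}(T_{j+1}-T_j) \leq n$). This gives the inclusion
\[ \{\tau > n\} \subseteq \bigcup_{j=0}^{\infty}\{T_{j+1} - T_j > n\beta_j,\ \tau \geq T_j\}, \]
whence a union bound and the strong Markov property at $T_j$ yield
\[ \P_x(\tau > n) \leq \sum_{j=0}^\infty \E_x[\P_{H_j}(T_1 > n\beta_j)\,;\, \tau \geq T_j]. \]

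To estimate each summand, I would apply \eqref{eq:eppel} to the inner probability to obtain $\P_{H_j}(T_1 > n\beta_j) \leq c(|H_j|+1)(n\beta_j)^{-1/2}$ (with $c$ chosen large enough that the bound persists trivially when $n\beta_j < 1$, using $\P \leq 1$). By Lemma \ref{lem:assumptionModelConsequence} together with \eqref{eq:assumptionModel} one has $\E_x[|H_j|+1\,;\, \tau \geq T_j] \leq c\gamma^j(|x|+1)$, so
\[ \P_x(\tau > n) \leq c\,n^{-1/2}(|x|+1)\sum_{j=0}^\infty \beta_j^{-1/2}\gamma^j. \]
Since $\beta_j^{-1/2}$ grows only linearly in $j$ while $\gamma < 1$, the series converges and the lemma follows.

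The delicate point is the choice of $\beta_j$: a cruder decomposition such as splitting at a uniform cutoff $j \leq \lfloor C\log n\rfloor$ introduces an unwanted logarithmic factor into the bound, while a polynomial schedule $\beta_j \propto (j+1)^{-2}$ lets the geometric decay $\gamma^j$ from \eqref{eq:assumptionModel} absorb the factor $\beta_j^{-1/2}$ coming from \eqref{eq:eppel} and preserves the sharp $n^{-1/2}$ rate.
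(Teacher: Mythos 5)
Your argument is correct and uses the same basic ingredients as the paper's proof (a pigeonhole decomposition along the inter-crossing increments $T_{j+1}-T_j$, the one-sided estimate \eqref{eq:eppel}, and the absorption-weighted bound of Lemma \ref{lem:assumptionModelConsequence} combined with \eqref{eq:assumptionModel}), but the weight schedule differs in a way that streamlines the proof. The paper uses geometric weights $\lceil nd\gamma^j\rceil$ with $d=(1-\gamma)/2$, which only partition a time budget of $n$ when the index is truncated at $k\sim-\ln n/\ln\gamma$; this forces a preliminary two-case split $\P_x(\tau>n)=\P_x(\tau>n,T_k>n)+\P_x(\tau>n,T_k\le n)$, with the second term disposed of directly via \eqref{eq:assumptionModel} and $\gamma^k\le n^{-1}$. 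Your choice of $\beta_j\propto(j+1)^{-2}$ sums exactly to $1$, so the pigeonhole inclusion $\{\tau>n\}\subseteq\bigcup_j\{T_{j+1}-T_j>n\beta_j,\,\tau\ge T_j\}$ holds without truncation and the log-cutoff and the two-case decomposition disappear entirely; the price is a polynomially growing factor $\beta_j^{-1/2}\sim j$ in the final series, which the geometric decay $\gamma^j$ from Lemma \ref{lem:assumptionModelConsequence} absorbs just as easily as the factor $\gamma^{-j/2}$ in the paper's version. One small point worth keeping in the write-up is the one you already flag: \eqref{eq:eppel} is stated for integer times, so for small $j$ where $n\beta_j<1$ you need the trivial bound $\P\le 1$, and for non-integer $n\beta_j\ge1$ you should pass to $\lfloor n\beta_j\rfloor$ and absorb the resulting constant; both are harmless.
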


\begin{proof}
We first decompose the persistence event as
\begin{equation}
\label{eq:decomp}
\P_x(\tau > n)
=
\P_x(\tau > n, T_k > n) + \P_x(\tau > n, T_k \leq n),
\end{equation}
where $ - \ln(n)/\ln(\gamma) \leq k < - \ln(n)/\ln(\gamma) + 1$ with $\gamma$ from \eqref{eq:assumptionModel}. Then, by \eqref{eq:assumptionModel}, the second term can be estimated by
\[
 \P_x(\tau >  T_k ) \leq c \gamma^k = c e^{k \ln(\gamma)} \leq c e^{-\frac{\ln(n)}{\ln(\gamma)}\ln(\gamma)} = cn^{-1} = o(n^{-1/2})
\]
and thus is negligible. 
Now, we will estimate the first term in \eqref{eq:decomp}. 
For this purpose, let $d = (1-\gamma)/2$ and note that, for $n$ large enough,
\begin{align*}
\begin{split}
\label{eq:bedApriori}
\sum_{j=0}^{k-1} \lceil  nd\gamma^j  \rceil 
&\leq
\sum_{j=0}^{k-1}  nd\gamma^j  + k \\
&=
nd\frac{1-\gamma^k}{1-\gamma}  + k \\
&\leq
n/2  - \ln(n)/\ln(\gamma) + 1 \\
&\leq n. 
\end{split}
\end{align*}
Therefore, by inequality \eqref{eq:eppel} in the third step and inequality
\eqref{eq:assumptionModel} and Lemma~\ref{lem:assumptionModelConsequence} in the fourth step, we obtain
\begin{align*}
\P_x( \tau > n , T_k > n )
&\leq
\sum_{j=0}^{k-1} \P_x( \tau \geq T_j , T_{j+1}-T_{j} > \lceil nd\gamma^j  \rceil ) \\
&=
\sum_{j=0}^{k-1} \int p_y^{(T)}(\lceil nd\gamma^j  \rceil) \P_x(H_j \in \di y , \tau \geq T_j) \\
&\leq
\sum_{j=0}^{k-1} \int  \frac{c(|y|+1)}{\sqrt{nd\gamma^j }} \P_x(H_j \in \di y , \tau \geq T_j) \\
&\leq
\sum_{j=0}^{k-1}  \frac{c\gamma^j(|x|+1)}{\sqrt{nd\gamma^j }}  \\
&\leq
c (|x|+1) n^{-1/2},
\end{align*}
which completes the proof.
\end{proof}

The proofs of our main results are based on the next lemma.
It allows us to replace persistence events by easier to handle events where all zero-crossings occur at the beginning and overshoots over zero are relatively small.

\begin{lem}
\label{lem:jumpsBeginning}
Let $c>0$. Assume that \eqref{eq:assumptionModel} holds.
Then, we have uniformly in 
$\{ x \colon |x|\leq a_n n^{1/2}, \P_x( \tau > n ) \geq c^{-1}(|x|+1)n^{-1/2} \}$ that, as $n \to \infty$,
\begin{equation*}
\P_x( \tau > n )
\sim
\P_x( \exists k \geq 0 \colon |H_k| \leq a_n n^{1/2} , T_k \leq b_n ,  T_{k+1} > n , \tau > n ).
\end{equation*}
\end{lem}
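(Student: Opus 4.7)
The plan is to show that the error $\P_x(\tau > n) - \P_x(B_n)$ is $o((|x|+1)n^{-1/2})$ uniformly over the specified set, where $B_n$ denotes the event on the right-hand side; dividing by the lower bound in \eqref{eq:assumptionOrder0} then yields the claimed asymptotic equivalence. The first observation is the correct decomposition: on $\{\tau>n\}$ the index $K := \max\{k : T_k \leq n\}$ is uniquely defined and satisfies $T_{K+1}>n$ automatically, so the existential quantifier in $B_n$ is forced to $k = K$, giving $B_n \cap \{\tau>n\} = \{\tau>n,\,T_K \leq b_n,\,|H_K| \leq a_n n^{1/2}\}$. Consequently the error $\{\tau>n\}\setminus B_n$ splits as $E_1 \cup E_2$ with $E_1 = \{\tau>n,\,T_K \leq b_n,\,|H_K|>a_n n^{1/2}\}$ (\emph{large overshoot at the last crossing}) and $E_2 = \{\tau>n,\,T_K > b_n\}$ (\emph{last crossing too late}).

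For $E_1$ I would partition by $K=k$, use $\{K=k\} \subseteq \{\tau\geq T_k,\,T_{k+1}>n\}$ together with the strong Markov property at $T_k$, and bound $\P_{H_k}(T_1 > n-T_k) \leq c(|H_k|+1)(n-b_n)^{-1/2} \leq c(|H_k|+1)n^{-1/2}$ via \eqref{eq:eppel} (using $T_k \leq b_n = o(n)$). This reduces matters to showing $\sum_{k\geq 0}\E_x[(|H_k|+1);\,\tau\geq T_k,\,|H_k|>a_n n^{1/2}] = o(|x|+1)$ uniformly, which follows by an $\varepsilon$-truncation: for any $\varepsilon > 0$, Lemma~\ref{lem:assumptionModelConsequence} yields a fixed $k_0$ such that the tail $\sum_{k\geq k_0}c\gamma^k(|x|+1)/(1-\gamma) \leq \varepsilon(|x|+1)$, while each of the finitely many $k < k_0$ contributes $o(|x|+1)$ by Lemma~\ref{lem:estimates_an_bn}(a2).

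For $E_2$ I would let $J := \min\{j \geq 1 : T_j > b_n\}$, producing the disjoint partition $E_2 = \bigsqcup_{j\geq 1}\{\tau>n,\,T_{j-1}\leq b_n < T_j \leq n\}$. Strong Markov at $T_{j-1}$ reduces each summand to an expectation of the form $\E_x[\1{\{\tau\geq T_{j-1},\,T_{j-1}\leq b_n\}}\,\P_y(b_n-t<T_1\leq n-t,\ \tau>n-t)]$ with $y = H_{j-1}$, $t = T_{j-1}$. Applying strong Markov once more at $T_1$ of the fresh walk and invoking Lemma~\ref{lem:apriori} for the post-$T_1$ survival factor gives $\P_{H_1}(\tau>n-t-T_1) \leq c(|H_1|+1)(n-t-T_1)^{-1/2}$ inside the inner expectation. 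I would then split into four subcases according to whether $T_{j-1}\leq b_n/2$ and whether $T_1\leq (n-t)/2$. The dominant subcase ($T_{j-1}\leq b_n/2,\ T_1\leq (n-t)/2$) enjoys $(n-t-T_1)^{-1/2}\leq c n^{-1/2}$ together with the vanishing factor $\E_y[|H_1|+1;\,T_1>b_n/2]/(|y|+1)=o(1)$ from Lemma~\ref{lem:estimates_an_bn}(b2) applied with $b_n/2$, which after the same $\varepsilon$-truncation in $j$ yields $o(n^{-1/2}(|x|+1))$; the three remaining subcases are either small directly via the sharp decay $\P_y(T_1=s)\leq c(|y|+1)s^{-3/2}$ from \eqref{eq:eppel0} (when $T_1$ is close to $n$) or reducible to Lemma~\ref{lem:estimates_an_bn}(b1),(b2) on $\{T_{j-1}>b_n/2\}$ with the same truncation.

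The main obstacle is the infinite sum over the number of zero-crossings: the uniform estimates in Lemma~\ref{lem:estimates_an_bn} are for fixed $k$ and degrade badly in $k$, so they cannot simply be summed; they must be combined with the geometric decay $\gamma^k$ coming from \eqref{eq:assumptionModel} and Lemma~\ref{lem:assumptionModelConsequence} through the $\varepsilon$-truncation scheme. A secondary technical point is extracting a bound of order $n^{-1/2}$ (rather than the wrong order $b_n^{-1/2}$) in the $E_2$ analysis: the constraint $T_1 > b_n-t$ alone via \eqref{eq:eppel} gives an insufficient bound, and one genuinely needs to combine it with the survival constraint $\tau > n-t$ via the second strong Markov application at $T_1$.
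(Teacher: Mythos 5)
Your overall architecture matches the paper's proof quite closely: the same decomposition of $\{\tau>n\}\setminus B_n$ into a large-overshoot error ($E_1$) and a late-crossing error ($E_2$), the same $\varepsilon$-truncation over the number of zero-crossings that combines the geometric decay from \eqref{eq:assumptionModel} and Lemma~\ref{lem:assumptionModelConsequence} (for large $k$) with the fixed-$k$ uniform estimates of Lemma~\ref{lem:estimates_an_bn} (for small $k$), the use of Lemma~\ref{lem:apriori} for the post-crossing survival, and finally division by the lower bound on $\P_x(\tau>n)$. Your treatment of $E_1$ is essentially the paper's second part and is fine.

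There is, however, a genuine gap in your $E_2$ analysis, in the subcase $T_{j-1}\le b_n/2$, $T_1>(n-t)/2$ (last crossing occurring late). Your claim that this is ``small directly via the sharp decay'' \eqref{eq:eppel0} is quantitatively wrong: summing $c(|y|+1)s^{-3/2}$ over $s\in((n-t)/2,\,n-t]$, a window of width of order $n$, gives $c'(|y|+1)n^{-1/2}$ with a non-vanishing constant, i.e.\ the same order as the main term rather than $o\bigl((|y|+1)n^{-1/2}\bigr)$; and since $t\le b_n/2$ there is no lateness of $T_{j-1}$ to supply an $o(1)$ factor either. The post-$T_1$ survival factor $(n-t-T_1)^{-1/2}$ from Lemma~\ref{lem:apriori} does not rescue this as stated, because it is of order one when $T_1$ is within $o(n)$ of $n-t$, and none of the cited estimates controls the overshoot moment jointly with the local behaviour of $T_1$. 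Closing this case requires the extra device the paper uses: split the late regime once more at $(1-\delta)n$. For crossings in $[b_n,(1-\delta)n]$ the residual survival contributes $(\delta n)^{-1/2}$ via Lemma~\ref{lem:apriori} and the $o(1)$ factor of \eqref{eq:unif_b_n} applies; for a crossing landing in $((1-\delta)n,n]$ whose predecessor lies before $b_n$, \eqref{eq:eppel0} is summed over a window of width about $\delta n$, yielding a factor of order $n^{-1/2}\bigl((1-\delta)^{-1/2}-1+o(1)\bigr)$; then one lets $\delta\to0$ slowly enough (a diagonal argument) so that both contributions are $o\bigl(n^{-1/2}(|x|+1)\bigr)$. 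A secondary, fixable point: in your dominant subcase you invoke Lemma~\ref{lem:estimates_an_bn}(b2) at $y=H_{j-1}$, which requires $|H_{j-1}|\le a_n n^{1/2}$; the complementary event $|H_{j-1}|>a_n n^{1/2}$ must be routed through (a1),(a2) as in the paper's \eqref{eq:unif_a_n} --- your truncation in $j$ alone does not cover it.
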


\begin{proof}
The proof is organized into two parts. First, we will prove that 
\[
\sup_{|x| \leq a_n n^{1/2}} \frac{\P_x( \exists k \geq 0 \colon T_k \in [b_n,n], \tau > n )}{|x|+1}  = o(n^{-1/2}).
\] 
Then, we will show that 
\[
\sup_{|x| \leq a_n n^{1/2}} \frac{\P_x( \exists k \geq 0 \colon |H_k| > a_n n^{1/2}, T_k < b_n, T_{k+1} > n,  \tau > n )}{|x|+1}  = o(n^{-1/2}).\] 
Combining both statements, completes the proof.

\emph{First part:} 
By Lemma \ref{lem:apriori}, we obtain that 
\begin{align}
\label{eq:lem:jumpsBeginning:step1}
\begin{split}
\P_x( \exists k \geq 0 & \colon T_k \in [b_n , (1-\delta) n] , \tau > n )\\
&\leq
\sum_{k=0}^\infty \P_x( T_k \in [b_n , (1-\delta) n] , \tau > n ) \\
&\leq
\sum_{k=0}^\infty\int p_y^{(\tau)}(\lceil\delta n\rceil)  \P_x( H_k \in \di y , T_k > b_n , \tau \geq T_k ) \\
&\leq
\sum_{k=0}^\infty\int \frac{c(|y|+1)}{\sqrt{\delta n}} \P_x( H_k \in \di y , T_k > b_n , \tau \geq T_k ).
\end{split}
\end{align}
For our next argument, let us recall that, 
by (b1) and (b2) in Lemma \ref{lem:estimates_an_bn}, we have, for $|x| \leq a_n n^{1/2}$ and some sequence $c_n^{(k)}=o(1)$,
\[
\int (|y|+1) \P_x( H_k \in \di y , T_k > b_n ) \leq c_n^{(k)}(|x|+1),
\]
and that, 
by Lemma \ref{lem:assumptionModelConsequence} and \eqref{eq:assumptionModel}, we have, for all $x \in \R$,
\[
\int (|y|+1) \P_x( H_k \in \di y , \tau \geq T_k ) \leq c \gamma^k(|x|+1).
\]
Now, let $\varepsilon>0$. Let us choose $k_0$ such that $\sum_{k=k_0}^\infty c \gamma^k \leq \varepsilon/2$ 
and 
$n_0$ such that $ c_n^{(k)} \leq \varepsilon/(2 k_0)$ for $k < k_0$ and $n \geq n_0$. Then, we obtain
\begin{align*}
\sum_{k=0}^\infty\int & (|y|+1) \P_x( H_k \in \di y , T_k > b_n , \tau \geq T_k )\\
\leq&
\sum_{k=0}^{k_0-1} \int (|y|+1) \P_x( H_k \in \di y , T_k > b_n  ) 
\\&+ 
\sum_{k=k_0}^\infty\int (|y|+1) \P_x( H_k \in \di y , \tau \geq T_k )\\
\leq&
\varepsilon (|x|+1) .
\end{align*}
Since $\varepsilon$ was arbitrary, it follows that 
\begin{equation}
\label{eq:unif_b_n}
\sup_{|x| \leq a_n n^{1/2}} \frac{\sum_{k=0}^\infty\int (|y|+1) \P_x( H_k \in \di y , T_k > b_n , \tau \geq T_k )}{|x|+1}
=
o(1).
\end{equation}
By \eqref{eq:lem:jumpsBeginning:step1}, we thus obtain that 
\begin{equation}
\label{eq:crossings AfterBn1}
\sup_{|x| \leq a_n n^{1/2}} \frac{\P_x( \exists k \geq 0 \colon T_k \in [b_n , (1-\delta) n] , \tau > n )}{|x|+1} = o((\delta n)^{-1/2}).
\end{equation}

Now, we will consider the case of zero-crossings between $(1-\delta)n$ and $n$.
Due to inequality \eqref{eq:eppel0} in the second step and Lemma \ref{lem:assumptionModelConsequence} and  inequality \eqref{eq:assumptionModel} in the fourth step, we obtain
\begin{align*}
&
\P_x(\exists k \geq 0 \colon T_k < b_n , (1-\delta)n < T_{k+1} \leq n , \tau > n  ) \\
&\quad\leq
\sum_{k=0}^\infty \P_x( T_k < b_n  , (1-\delta) n - b_n \leq T_{k+1}-T_k \leq n , \tau \geq T_k  ) \\
&\quad\leq
\sum_{k=0}^\infty \int \sum_{m=\lfloor (1-\delta) n - b_n \rfloor }^n \frac{c(|y|+1)}{m^{3/2}} \P_x(  H_k \in \di y ,  \tau \geq T_k ) \\
&\quad\leq
\sum_{k=0}^\infty \int \frac{c(|y|+1)}{\sqrt{n}} \left(  \frac{\sqrt{n}}{\sqrt{\lfloor (1-\delta) n - b_n \rfloor -1}} -1 \right) \P_x(  H_k \in \di y ,  \tau \geq T_k ) \\
&\quad\leq
\sum_{k=0}^\infty  \frac{c \gamma^k (|x|+1)}{\sqrt{n}} \left( \frac{\sqrt{n}}{\sqrt{\lfloor (1-\delta) n - b_n \rfloor -1}} -1 \right)  \\
&\quad\leq
\frac{c(|x|+1)}{\sqrt{n}} \left( \frac{\sqrt{n}}{\sqrt{\lfloor (1-\delta) n - b_n \rfloor -1}} -1 \right)  .
\end{align*} 
Now, letting $\delta \to 0$, as $n \to \infty$, we can conclude that
\begin{equation}
\label{eq:crossings AfterBn2}
\sup_{x \in \R} \frac{\P_x(\exists k \geq 0 \colon T_k < b_n , (1-\delta)n < T_{k+1} \leq n , \tau > n  )}{|x|+1} = o(n^{-1/2}).
\end{equation} 
Further, if $\delta \to 0$ slowly enough, so that \eqref{eq:crossings AfterBn1} is in 
$o(n^{-1/2})$, we can combine \eqref{eq:crossings AfterBn1} and \eqref{eq:crossings AfterBn2} to obtain  that 
\begin{equation*}
\sup_{|x| \leq a_n n^{1/2}} \frac{\P_x( \exists k \geq 0 \colon T_k \in [b_n,n] , \tau > n )}{|x|+1}  = o(n^{-1/2}).
\end{equation*}

\emph{Second part:} 
It remains to control the probability of observing a surviving path with a large overshoot at the last zero-crossing time before $n$. 
By \eqref{eq:eppel}, we obtain
\begin{align}
\label{eq:lem:jumpsBeginning:step2}
\begin{split}
&
\P_x( \exists k \geq 0 \colon |H_k| > a_n n^{1/2} , T_k < b_n , T_{k+1}>n ,  \tau > n ) \\
&\leq
\sum_{k=0}^\infty \P_x( |H_k| > a_n n^{1/2} , T_k < b_n ,  T_{k+1}-T_k > n-b_n , \tau \geq T_k ) \\
&\leq
\sum_{k=0}^\infty p_y^{(T)}(n-b_n)  \P_x( H_k \in \di y  , |H_k| > a_n n^{1/2} , \tau \geq T_k ) \\
&\leq
\sum_{k=0}^\infty \frac{c(|y|+1)}{\sqrt{n-b_n}} \P_x( H_k \in \di y  , |H_k| > a_n n^{1/2} , \tau \geq T_k ) .
\end{split}
\end{align}
Now, along the same lines as in the proof of \eqref{eq:unif_b_n}, we obtain, using Lemma \ref{lem:assumptionModelConsequence}, inequality \eqref{eq:assumptionModel} and identities (a1), (a2) in Lemma \ref{lem:estimates_an_bn}, that 
\begin{equation}
\label{eq:unif_a_n}
\sup_{|x| \leq a_n n^{1/2}} \frac{\sum_{k=0}^\infty\int (|y|+1) \P_x( H_k \in \di y , |H_k|>a_n n^{1/2} , \tau \geq T_k )}{|x|+1} 
=
o(1).
\end{equation}
Thus, combing \eqref{eq:lem:jumpsBeginning:step2} and \eqref{eq:unif_a_n}, we obtain that 
\begin{equation*}
\sup_{|x| \leq a_n n^{1/2}} \frac{\P_x( \exists k \geq 0 \colon |H_k| > a_n n^{1/2} , T_k < b_n , T_{k+1}>n ,  \tau > n )}{|x|+1}  = o(n^{-1/2}).
\end{equation*}
\end{proof}

\section{Proofs of the main results}
\label{sec:results}

\subsection{\texorpdfstring{Proof of Theorem~\ref{thm:persistence}}{Proof of Theorem 1}}
\label{sub:proofThm1}

We will first prove the non-uniform case and will prove the uniform case afterwards. 
In both cases the proof is based on the following application of Lemma \ref{lem:jumpsBeginning}. 
We obtain that,
uniformly in 
$\{ x \colon |x| \leq a_n n^{1/2}, \P_x( \tau > n ) \geq c^{-1}(|x|+1)n^{-1/2}\}$,
\begin{align}
\label{eq:begProofThm}
\begin{split}
n^{1/2}& \cdot  \P_x( \tau > n )\\
&\sim
n^{1/2} \cdot \P_x( \exists k \geq 0 \colon |H_k| \leq a_n n^{1/2} , T_k \leq b_n ,  T_{k+1} > n , \tau > n ) \\
&=
\sum_{k=0}^\infty \int n^{1/2} \cdot p_y^{(T,\tau)}(n-t) \1{\{ |y|\leq a_nn^{1/2} , t\leq b_n \}} \\& \qquad \qquad \qquad \qquad \qquad \qquad \P_x( H_k \in \di y , T_k \in \di t , \tau \geq T_k ) \\
&\sim
\sum_{k=0}^\infty \int (n-t)^{1/2} \cdot p_y^{(T,\tau)}(n-t) \1{\{|y|\leq a_nn^{1/2}, t\leq b_n \}} \\& \qquad \qquad \qquad \qquad \qquad \qquad \P_x( H_k \in \di y , T_k \in \di t , \tau \geq T_k ),
\end{split}
\end{align}
where $(b_n)$ is the sequence defined in Subsection \ref{sub:notation}.
In particular, \eqref{eq:begProofThm} holds for fixed $x$ satisfying \eqref{eq:assumptionOrder0}.
By \eqref{eq:thmConvU}, we have, for all $y \in \R$,
\[
(n-t)^{1/2} \cdot p_y^{(T,\tau)}(n-t) \1{\{ t\leq b_n , |y|\leq a_n n^{1/2} \}} \to u(y).\]
Further, 
by \eqref{eq:eppel}, for all $y \in \R$,
\begin{align*}
n^{1/2} \cdot p_y^{(T,\tau)}(n)
\leq
n^{1/2} \cdot \P_y(T_1 > n) 
\leq
c(|y|+1) .
\end{align*}
Hence, the statement for fixed $x$ follows immediately from \eqref{eq:begProofThm} 
by Lemma \ref{lem:assumptionModelConsequence}, inequality \eqref{eq:assumptionModel} and  the dominated convergence theorem.

Now, we consider the uniform case and thus assume that 
\begin{equation}
\label{eq:unifCondProof0}
\sup_{|y| \leq a_n n^{1/2}} \left| n^{1/2} p_y^{(T,\tau)}(n) -  u(y) \right| = o(1).
\end{equation} 
Let $a_n':= a_{n-b_n}(1-b_n/n)^{1/2}$. Then, one easily obtains that $a_n'n^{1/2} \nearrow \infty$ and $(a_n')^2 n = o(b_n)$. Thus, since $a_n' n^{1/2} = a_{n-b_n}(n-b_n)^{1/2} \leq a_{n-t}(n-t)^{1/2}$, for $t \leq b_n$, one obtains from \eqref{eq:unifCondProof0} that
\begin{equation}
\label{eq:unifCondProof}
\sup_{|y| \leq a_n' n^{1/2}, t \leq b_n} \left| (n-t)^{1/2} p_y^{(T,\tau)}(n-t) -  u(y) \right| = o(1).
\end{equation}
Further, we recall that, due to the assumption $\P_x( \tau > n ) \geq c^{-1}(|x|+1)n^{-1/2}$, we have $n^{1/2} \cdot \P_x( \tau > n ) \geq c^{-1}(|x|+1)$.
Hence, we obtain that,
uniformly in 
$\{ x \colon |x| \leq a_n' n^{1/2}, \P_x( \tau > n ) \geq c^{-1}(|x|+1)n^{-1/2}\}$,
\begin{align*}
n^{1/2} \cdot \P_x( \tau > n )
&\sim
\sum_{k=0}^\infty \int u(y) \1{\{|y|\leq a_n' n^{1/2}, t\leq b_n  \}} \P_x( H_k \in \di y , T_k \in \di t , \tau \geq T_k ) \\
&\sim
\sum_{k=0}^\infty \int u(y) \P_x( H_k \in \di y  , \tau \geq T_k ) \\
&=
\sum_{k=0}^\infty \E_x[ u(H_k) ; \tau \geq T_k ] ,
\end{align*}
where we used \eqref{eq:begProofThm}, \eqref{eq:unifCondProof} and inequality \eqref{eq:assumptionModel} in the first step. 
In the second step, we used inequalities \eqref{eq:unif_b_n} and \eqref{eq:unif_a_n} combined with the fact that $u(y) \leq c(|y|+1)$. 
This finishes the proof.

\subsection{\texorpdfstring{Proof of Theorem~\ref{thm:scaling}}{Proof of Theorem 2}}
\label{sub:proofThm2}

The idea of the proof is based on the observations 
after the statement of Theorem~\ref{thm:scaling}: 
A random walk that survives until time $n$ typically crosses zero only a few times at the beginning and then stays on one side of zero. One would expect that this beginning part should disappear in the scaling limit.
Our main tools to show this are Lemma~\ref{lem:jumpsBeginning} from above and  Theorem~5.5 in \cite{Billingsley1968}, which is an extension of the continuous mapping theorem that allows us to replace the continuous function in the classical theorem by certain sequences of continuous functions. In our situation it states the following.
Let $(W_n)$ be a sequence of stochastic processes and $W$ be stochastic process in $(C([0,1]),\|\cdot\|_\infty)$. Let $\tilde{\Theta}_n \colon (C([0,1]),\|\cdot\|_\infty) \to (C([0,1]),\|\cdot\|_\infty)$ be measurable functions for $n=1,2,\ldots$ and \[\{ h \in C([0,1]) \colon \exists (h_n) \subseteq C([0,1]) \text{ s.t. } \| h_n - h \|_\infty \to 0 ,\ \| \tilde{\Theta}_n(h_n) - \tilde{\Theta}(h) \|_\infty \not\to 0 \}\]
be a null set w.r.t. $Law(W)$.
Then,
\begin{equation}
\label{eq:thm5.5}
Law(W_n) \Rightarrow Law(W) \quad \text{implies} \quad Law(\tilde{\Theta}_n(W_n)) \Rightarrow Law(\tilde{\Theta}(W)).
\end{equation}

We begin by considering a modified random walk, which is composed of a fixed beginning part and a normal random walk. We will show that the scaling limit of this process does not depend on the beginning part.
For this purpose, let us introduce some notation. 
Let $t_0>0$ and let $g \in C([0,t_0])$. 
We denote by $g^{(n)} \in C([0,t_0/n])$ the rescaled version of $g$, defined by $g^{(n)}(t):=g(t/n)/(\sigma n^{1/2})$. Further, for $n \geq t_0$ and $h \in C([0,1])$, we denote by $\Theta_n(g^{(n)},h) \in C([0,1])$ the function given by
\[
\Theta_n(g^{(n)},h)(t) = \begin{cases} g^{(n)}(t), &  t < t_0/n,\\ h(t-t_0/n)-h(0)+g^{(n)}(t_0/n), &   t_0/n \leq t \leq 1. \end{cases}
\]
Note that $\Theta_n(g^{(n)},\cdot)$ is continuous on $C([0,1])$ with respect to the supremum norm $\| \cdot \|_\infty$.
Now, we want to study weak limits of the rescaled processes $\Theta_n(g^{(n)},\hat{S}_n)$ in $(C([0,1]),\| \cdot \|_\infty)$. 
Let $f \colon C([0,1]) \to \R$ be a continuous and bounded function.
Then, we are interested in the limit 
of the quantity 
\begin{equation}
\label{eq:modifiedSequence}
d_n(f,g^{(n)})  := \frac{\E_{y}[ f(\Theta_n(g^{(n)} , \hat{S}_n )) ; T_1 > n, \tau > n ]}{\P_y( T_1 > n , \tau > n )}, \quad \text{as } n \to \infty, 
\end{equation}
where 
$y=g(t_0)$. 
Let us assume that $u(y)>0$. 
Then, by \eqref{eq:thmScalingAssumption}, we have 
\[
\frac{\E_{y}[ f( \hat{S}_n ) ; T_1 > n, \tau > n ]}{\P_y( T_1 > n , \tau > n )} \to \begin{cases} \E[f(W_+)] , &  y \geq 0,\\ \E[f(W_-)] , & y < 0. \end{cases}
\]
In the next step, we will replace this sequence by the sequence in \eqref{eq:modifiedSequence} and apply the earlier mentioned extension of the continuous mapping theorem \eqref{eq:thm5.5} with $\tilde{\Theta}_n(\cdot) := \Theta_n(g^{(n)},\cdot)$. 
Thus, we have
\begin{align}
\label{eq:proofThm2Conv}
d_n(f,g^{(n)}) \to 
\begin{cases} \E[f(W_+)] , &  y \geq 0,\\ \E[f(W_-)] , & y < 0, \end{cases}
\end{align}
if $\| \Theta_n(g^{(n)},h_n) - h \|_\infty \to 0$ for all $h,h_1,h_2,\ldots \in C([0,1])$ with $h(0)=0$ and $\lim_{n \to \infty} \|h-h_n\|_\infty = 0$.
%
But this follows already from the following simple calculation. We have 
\begin{align*}
\| \Theta_n(g^{(n)},h_n) - h \|_\infty
\leq&
\| \Theta_n(g^{(n)},h_n) - \Theta_n(g^{(n)},h) \|_\infty \\ &+ \| \Theta_n(g^{(n)},h) - \Theta_n(0,h) \|_\infty + \| \Theta_n(0,h) - h \|_\infty \\
\leq&
(\| h_n - h \|_\infty + |h_n(0)|) 
+ \| g^{(n)} \|_\infty 
+ \| \Theta_n(0,h) - h \|_\infty\\
\to&
0.
\end{align*}
Clearly, the first term and the second term tend to $0$. 
Since $h$ is uniformly continuous, as a fixed continuous function on a compact interval, also the last term tends to $0$.

In the next step, we provide a slightly modified version of Lemma \ref{lem:jumpsBeginning}. 
Due to \eqref{eq:thmConvU} and the dominated convergence theorem, we can replace the integrand in \eqref{eq:begProofThm} by $n^{1/2} \cdot p_y^{(T,\tau)}(n)$ and obtain
\begin{equation*}
\P_x( \tau > n )
\sim
\P_x( \exists k \geq 0 \colon T_k \leq b_n , T_{k+1}-T_k>n,  \tau > n+T_k ).
\end{equation*}
Therefore, since 
\begin{equation}
\label{eq:lem6modEvents}
\{ \exists k \geq 0 \colon T_k \leq b_n , T_{k+1}-T_k>n,  \tau > n+T_k \} \subseteq \{ \tau > n\},
\end{equation}
and $f$ is bounded, we obtain 
\begin{align*}
& \left| \frac{\E[f(\hat{S}_n);\tau>n]}{\P(\tau>n)} - \frac{\E[f(\hat{S}_n);\exists k \geq 0 \colon T_k \leq b_n ,  T_{k+1}-T_k>n,  \tau > n+T_k]}{\P(\tau>n)}  \right| \\
& \leq
\| f \|_\infty \cdot \frac{\P( \{ \tau > n \} \setminus \{  \exists k \geq 0 \colon T_k \leq b_n ,  T_{k+1}-T_k>n,  \tau > n+T_k \}  )}{\P(\tau>n)} \to 0 .
\end{align*}
Thus, in the following, we can replace the event $\{ \tau > n \}$ by the easier to handle left event in \eqref{eq:lem6modEvents}.

We recall that, by \eqref{eq:eppel}, 
\begin{equation}
\label{eq:proofThm2Bound}
n^{1/2} \cdot p_y^{(T,\tau)}(n) | d_n(f,g^{(n)}) | \leq c(|y|+1) \| f \|_\infty.
\end{equation}
Further, we set $\hat{S}_n^{(t)} := \hat{S}_n |_{[0,t/n]} \in C([0,t/n])$.
Then, by Theorem \ref{thm:persistence} in the second step and by \eqref{eq:thmConvU}, \eqref{eq:proofThm2Conv}, \eqref{eq:proofThm2Bound} and dominated convergence in the third step, we obtain
\begin{align*}
&\frac{1}{p_x^{(\tau)}(n)} \E_x[ f(\hat{S}_n) ; \exists k \geq 0 \colon T_k \leq b_n ,  T_{k+1}-T_k>n,  \tau > n+T_k ]\\
&=
\sum_{k=0}^\infty \int \frac{p_y^{(T,\tau)}(n)}{p_x^{(\tau)}(n)} d_n(f,g^{(n)}) \1{\{ t \leq b_n  \}} \\ & \qquad \qquad \qquad \qquad \qquad \qquad \P_x( H_k \in \di y , T_k \in \di t, \hat{S}_n^{(t)} \in \di g^{(n)}, \tau \geq T_k )  \\
&\sim
V(x)^{-1}\sum_{k=0}^\infty \int n^{1/2} \cdot p_y^{(T,\tau)}(n) d_n(f,g^{(n)}) \1{\{ t \leq b_n  \}} \\ & \qquad \qquad \qquad \qquad \qquad \qquad \P_x( H_k \in \di y , T_k \in \di t, \hat{S}_n^{(t)} \in \di g^{(n)} , \tau \geq T_k )  \\
&\sim
V(x)^{-1} \sum_{k=0}^\infty \int u(y)   \left( \E[f(W_+)]\1{\{y \geq 0\}} + \E[f(W_-)]\1{\{y < 0\}} \right) \\ & \qquad \qquad \qquad \qquad \qquad \qquad \qquad \qquad \qquad \qquad  \P_x( H_k \in \di y , \tau \geq T_k )  \\
&=
\P(\rho=1) \E[f(W_+)] + \P(\rho=0) \E[f(W_-)].
\end{align*}

\section*{Acknowledgements} 
I am very grateful to V. Vysotsky for drawing my attention to 
the work \cite{Vysotsky2015} and to F. Aurzada for the valuable comments which helped improve the presentation and the clarity of the paper.


\end{document}